\numberwithin{equation}{section}
\newtheorem{maintheorem}{Theorem}
\newtheorem{theorem}{Theorem}[section]
\newtheorem*{theorem*}{Theorem}
\newtheorem{claim}[theorem]{Claim}
\newtheorem{proposition}[theorem]{Proposition}
\newtheorem{fact}[theorem]{Fact}
\newtheorem{corollary}[theorem]{Corollary}
\theoremstyle{definition}{

\newtheorem*{definition*}{Definition}

}
\theoremstyle{remark}{

\newtheorem*{remark*}{Remark}

}
\newcommand{\R}{\mathbb R}
\newcommand{\Z}{\mathbb Z}
\newcommand{\deq}{\stackrel{\scriptscriptstyle\triangle}{=}}
\newcommand{\E}{\mathbb{E}}
\renewcommand{\P}{\mathbb{P}}
\DeclareMathOperator{\var}{Var}
\newcommand{\gap}{\text{\tt{gap}}}
\newcommand{\tmix}{t_\textsc{mix}}
\newcommand{\tv}{{\textsc{tv}}}
\newcommand{\given}{\, \big| \,}
\newcommand{\one}{\mathbbm{1}}
\newcommand{\btop}{\textsc{t}}
\newcommand{\bmid}{\textsc{m}}
\newcommand{\llb}{\llbracket}
\newcommand{\rrb}{\rrbracket}
\newcommand{\OmFK}{\Omega_\textsc{fk}}
\newcommand{\OmI}{\Omega_\textsc{is}}
\newcommand{\Lbar}{\bar{\Lambda}}
\renewcommand{\epsilon}{\varepsilon}
\renewcommand{\phi}{\varphi}
\newcommand{\cM}{\mathcal{M}}
\newcommand{\cP}{\mathcal{P}}
\newcommand{\cB}{\mathcal{B}}
\newcommand{\cF}{\mathcal{F}}
\date{}
\begin{document}
\title[Mixing of critical Ising on the square lattice]{Critical Ising on the square lattice \\ mixes in polynomial time}

\author{Eyal Lubetzky}
\address{Eyal Lubetzky\hfill\break
Microsoft Research\\
One Microsoft Way\\
Redmond, WA 98052-6399, USA.}
\email{eyal@microsoft.com}
\urladdr{}

\author{Allan Sly}
\address{Allan Sly\hfill\break
Microsoft Research\\
One Microsoft Way\\
Redmond, WA 98052-6399, USA.}
\email{allansly@microsoft.com}
\urladdr{}

\begin{abstract}
The Ising model is widely regarded as the most studied model of spin-systems in statistical physics. The focus of this paper is its dynamic (stochastic) version, the Glauber dynamics, introduced in 1963 and by now the most popular means of sampling the Ising measure.
Intensive study throughout the last three decades has yielded a rigorous understanding of the spectral-gap of the dynamics on $\Z^2$ everywhere except at criticality. While the critical behavior of the Ising model has long been the focus for physicists, mathematicians have only recently developed an understanding of its critical geometry with the advent of SLE, CLE and new tools to study conformally invariant systems.

A rich interplay exists between the static and dynamic models.
At the static phase-transition for Ising, the dynamics is conjectured
to undergo a \emph{critical slowdown}: At high temperature the inverse-gap is $O(1)$, at the critical $\beta_c$ it is polynomial in the side-length and at low temperature it is exponential in it.
A seminal series of papers verified this on $\Z^2$ except at $\beta=\beta_c$
where the behavior remained a challenging open problem.

Here we establish the first rigorous polynomial upper bound for the critical mixing,
thus confirming the critical slowdown for the Ising model in $\Z^2$. Namely, we show that on a finite box with arbitrary (e.g.\ fixed, free, periodic) boundary conditions, the inverse-gap
at $\beta=\beta_c$ is polynomial in the side-length. The proof harnesses recent understanding of the scaling limit of critical Fortuin-Kasteleyn representation of the Ising model together with classical tools from the analysis of Markov chains.
\end{abstract}

\maketitle

\vspace{-0.77cm}

\section{Introduction}
The classical Ising model on the lattice is one of the most studied models in mathematical physics with thousands of research papers since its introduction in 1925. In his famous work from 1944, Onsager~\cite{Onsager} exactly solved the model in two dimensions, thereby determining its critical temperature.
Ever since, physicists directed most of their attention to the fascinating behavior of the model at criticality (see for instance the 20 volumes of~\cite{DL}). In this regime the model on $\Z^2$ exhibits delicate fractal geometry
whose rigorous understanding was only recently obtained using conformally invariant scaling limits.
Here we report the first rigorous confirmation that the spectral-gap of the Glauber dynamics for the critical Ising model in $\Z^2$, perhaps the most practiced methods for sampling its Gibbs distribution, is polynomial in the side-length.

The Glauber dynamics (also known as the \emph{stochastic Ising model}) is a family of Markov chains introduced by Glauber~\cite{Glauber} in 1963, which both models the dynamic evolution of the Ising model
and provides a simple algorithm for sampling from its stationary distribution. The most notable examples are the
heat-bath dynamics and metropolis dynamics, both highly used in practice thanks to their simple and natural transition-rules.

An extensive program of work by mathematicians, physicists and computer scientists has related the static spatial-mixing properties of the Ising model to the mixing-rate of the Glauber dynamics, measured in terms of the gap in the spectrum of its generator. Relying on many experiments and studies in the theory of dynamical critical phenomena,
the spectral-gap of the dynamics on a finite box in the lattice is conjectured to have the following \emph{critical slowdown} behavior (e.g.,~\cites{HH,LF,Martinelli97,WH}):
 \begin{compactitem}
 \item At high temperatures the inverse-spectral-gap is $O(1)$
  \item At the critical $\beta_c$ it is polynomial in the side-length
  \item At low temperatures it is exponential in the side-length
   \end{compactitem}
(in dimensions $d\geq3$ the surface-area plays the role of the side-length).
As we detail later, in a long series of seminal papers over the last three decades this rich interplay between the static and dynamic models was confirmed for the 2-dimensional Ising model with the crucial exception of the critical temperature $\beta_c$. By contrast to the detailed picture by now known for $\beta\neq \beta_c$, the behavior of the inverse-gap at criticality remained a stubborn and fundamental open problem, with no known sub-exponential bounds.

Our main result establishes that the inverse-gap is indeed polynomial in the side-length, thereby confirming the critical slowdown behavior of the Ising model on $\Z^2$.

\begin{maintheorem}\label{thm-1}
Consider the critical Ising model on a finite box $\Lambda \subset \Z^2$ of side-length $n$, i.e.\ at inverse-temperature $\beta_c=\frac12\log(1+\sqrt{2})$.
Let $\gap_\Lambda^\tau$ be the spectral-gap in the generator of the corresponding continuous-time Glauber dynamics
under an arbitrary fixed boundary condition $\tau$.
There exists an absolute constant $C>0$ (independent of $\Lambda,\tau$) such that $(\gap_\Lambda^\tau)^{-1} \leq n^C$.
\end{maintheorem}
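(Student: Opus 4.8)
The plan is to establish a polynomial upper bound on the inverse-spectral-gap by combining three ingredients: (i) a block-dynamics / recursive decomposition reducing the problem to boxes of half the side-length, (ii) control of the "cost" of this recursion via a spatial-mixing estimate that is uniform in boundary conditions, and (iii) the recently-understood scaling limit of the critical Fortuin--Kasteleyn (FK) representation to supply precisely this uniform spatial-mixing input. I would work with the censoring inequality and the standard comparison between single-site Glauber dynamics and block dynamics, so that $\gap_\Lambda^{-1}$ is bounded by the product of the inverse-gap of a two-block dynamics on $\Lambda$ and the worst inverse-gap over the blocks. Iterating $O(\log n)$ times then gives $\gap_\Lambda^{-1} \le \prod_{k} M_k$, where $M_k$ bounds the cost at scale $2^k$; if each $M_k \le C'$ for an absolute constant, the product is $\exp(O(\log n)) = n^{O(1)}$, which is exactly the target.

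**The key quantitative step.**

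The heart of the argument is to show that the two-block dynamics on a box of side-length $m$ — where one updates alternately the configuration on the left half and the right half given the rest — has inverse-gap bounded by an absolute constant, \emph{uniformly over the boundary condition $\tau$}. Equivalently (by the standard variance-decomposition bound for block dynamics, e.g.\ via the path/coupling argument of Martinelli--Olivieri type), it suffices to bound the influence that the configuration outside one half exerts on the conditional Ising measure inside that half: one needs that for two boundary configurations differing only far from a region, the total-variation distance of the induced measures decays. At $\beta_c$ this decay is only polynomial (not exponential), and naively that is too weak; the point is that for the \emph{block} recursion we only need the cost per level to be $O(1)$, and this follows because at criticality the FK representation has the property that a typical configuration does not contain a long open path crossing an annulus — the probability of such a crossing in an annulus of aspect ratio bounded away from $1$ is bounded away from $1$, uniformly in boundary conditions, by the Russo--Seymour--Welsh (RSW) theory for critical FK-Ising percolation that underlies the scaling limit. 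Thus the conditional measures in the two halves decouple with constant probability, which is exactly what the block-dynamics comparison needs.

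**Assembling the recursion and the main obstacle.**

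Concretely I would: first, use the FK(Edwards--Sokal) coupling to translate Ising spatial-mixing into FK connectivity statements; second, invoke uniform (in boundary conditions) RSW/crossing bounds for critical FK-Ising in annuli to show that with probability at least some $p_0>0$ there is a "blocking" dual circuit separating the two halves' boundaries, whence the two-block dynamics mixes in $O(1)$ steps regardless of $\tau$; third, feed this into the recursive block-dynamics inequality $\gap_{\Lambda_m}^{-1} \le C_0 \max_{\text{blocks }B} \gap_B^{-1}$ with block side-length $\sim m/2$ and an absolute $C_0$; fourth, unwind the recursion over $\log_2 n$ scales to get $\gap_\Lambda^{-1} \le C_0^{\log_2 n} \cdot \gap_{O(1)}^{-1} = n^{O(\log C_0)}$. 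The main obstacle — and where the bulk of the work lies — is the second step: making the RSW-type crossing estimates genuinely \emph{uniform over all boundary conditions} (fixed, free, periodic, and arbitrary mixtures), since the scaling-limit results are most cleanly stated for specific boundary conditions; one must push the FK comparison inequalities (monotonicity between free and wired) together with the annulus-crossing bounds far enough that the constant $p_0$ does not degrade with $\tau$ or with the scale. A secondary technical point is handling the interaction with the external boundary condition $\tau$ of $\Lambda$ itself throughout the recursion, which requires that the crossing bounds hold in the \emph{presence} of arbitrary wiring/spins on the outer boundary — again supplied by FK monotonicity sandwiching the measure between its free and wired versions.
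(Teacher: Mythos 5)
Your outline follows the same skeleton as the paper (block-dynamics recursion with $O(\log n)$ scales, plus an RSW-based, boundary-uniform spatial-mixing input obtained through the FK representation), but the key step as you sketch it would not go through for two reasons. First, the geometry of the blocks: with exact non-overlapping left/right halves, coupling the two chains after updating one block requires coupling the conditional laws on the \emph{entire} half, including the sites adjacent to the interface where the two conditioning configurations differ; in the FK picture this would need the cluster of the interface to not enter the half at all, an event of probability exponentially small in $n$, so the per-round coupling probability (hence the block-dynamics gap) would be exponentially small rather than $O(1)$. The paper instead uses \emph{overlapping} blocks and a spatial-mixing statement (Theorem~\ref{thm-spatial-mix}) that couples only the portion of the updated block at macroscopic distance from the differing boundary; your ``blocking dual circuit separating the two halves' boundaries'' only makes geometric sense once such an overlap/buffer is present, and this is exactly where the RSW crossing bound is applied.

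Second, and more substantively: for an arbitrary mixed boundary condition $\tau$ the Edwards--Sokal correspondence does not produce an FK measure to which RSW or free/wired sandwiching applies. It produces the FK measure with the plus-sites wired together and the minus-sites wired together, \emph{conditioned} on the event $A^\tau$ that the two wired clusters are not connected (Corollary~\ref{cor-FK-Ising-bc}), and this event can have exponentially small probability. Monotonicity between free and wired unconditioned measures, which you propose as the tool, says nothing directly about this conditioned measure, so the claim that a blocking crossing occurs with probability $p_0>0$ uniformly in $\tau$ does not follow as stated. The paper's proof supplies the missing ingredients: a revealing procedure run under the fully wired measure (to which Theorem~\ref{thm-RSW} does apply), a domination statement (Claim~\ref{clm-dom}) showing that the conditioned measures stay stochastically below the wired one -- crucially using FKG and the fact that $A^\tau$ is a decreasing event -- and the observation that once the cluster of the differing boundary segment is sealed off by closed dual edges, the measures conditioned on $A^\xi$ and $A^\eta$ agree \emph{exactly} on the unexposed region, which is what allows a consistent spin assignment (plus to $\cP$, minus to $\cM$) and hence the coupling. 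Without these steps the assertion that ``the conditional measures in the two halves decouple with constant probability,'' the heart of your argument, is unsupported; the surrounding machinery (Proposition~\ref{prop-block-single}, the recursion over scales, the conclusion $n^{O(1)}$) matches the paper and is fine.
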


Using well-known relations between $L^1$ and $L^2$ mixing (see, e.g.,~\cite{SaloffCoste})
an immediate corollary of our main result is that the total-variation mixing time of the Glauber dynamics is bounded above by a polynomial in the side-length $n$ (see Corollary~\ref{cor-tmix} in Section~\ref{subsec:prelim-mix-l1l2}).

Furthermore, we show a generalized version of the above theorem (see Theorem~\ref{thm-inverse-gap}) that applies to rectangles of unbounded aspect-ratio (e.g., an $n\times \mathrm{e}^n$ box), where the polynomial upper bound depends only on the shorter side-length.

Lower bounds on the inverse-gap can be derived already from the work of Onsager, as observed by Holley~\cite{Holley1}, demonstrating that it grows at least polynomially fast with $n$. Following this approach
we complement the above upper bound by showing that the inverse-gap is of order at least $n^{7/4}$ (see Theorem~\ref{thm-gap-lower-bound}).

\begin{remark*}
No attempt was made to optimize the exponent $C>0$ given in Theorem~\ref{thm-1}.
As we later elaborate, our proof determines this bound explicitly in terms of a
bound $c^+ < 1$ on the probability of a certain crossing event in the critical Fortuin-Kasteleyn representation of the Ising model (namely, the dependence on $1/(1-c^+)$ is logarithmic). The true exponent is believed to be universal and numerical experiments supporting this \cites{Grassberger,Ito,NB,WH,WHS} (among others) suggest that its value is about $2.17$.
\end{remark*}

\begin{figure}[t]
  {\,} \hfill
  \subfloat[Critical Ising model]{%
    \includegraphics[width=3in]{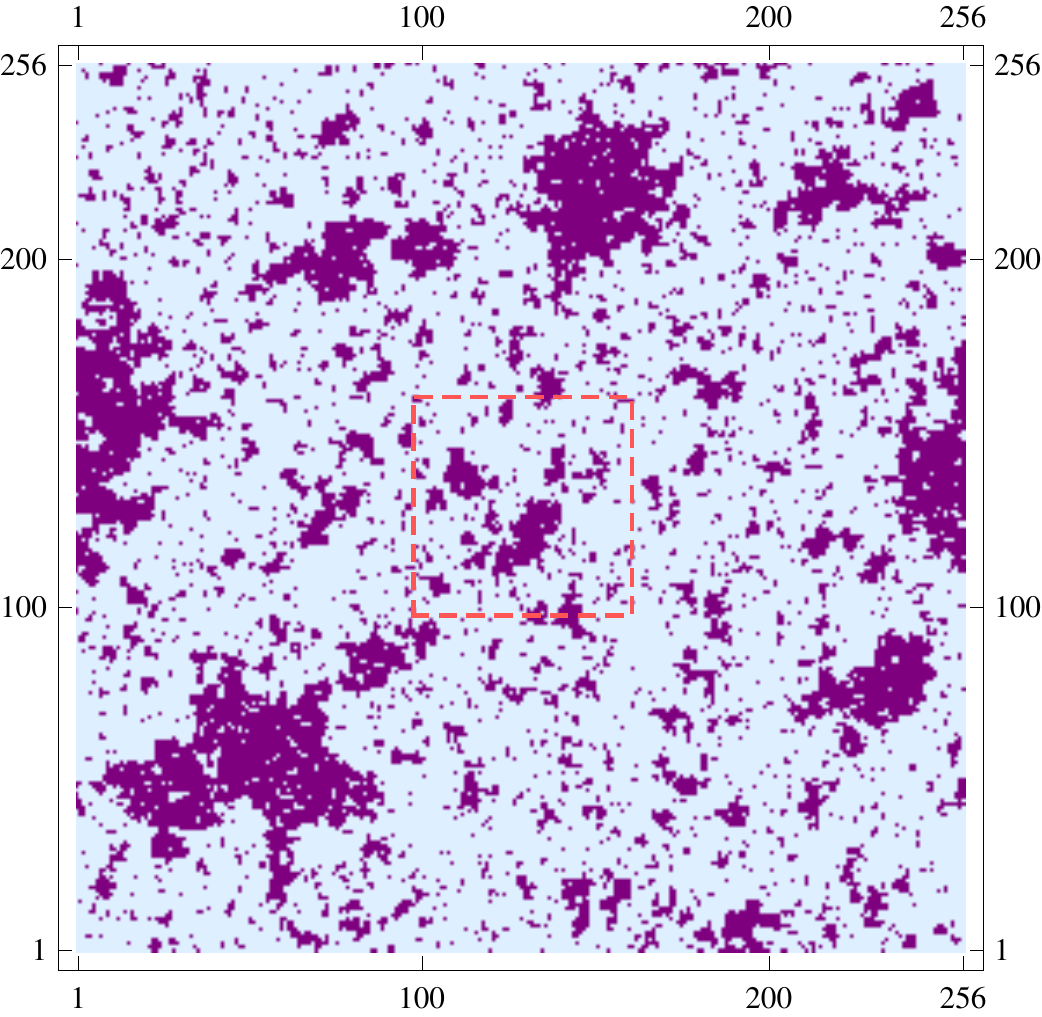}} \hfill
  \subfloat[Critical FK-Ising]{%
    \raisebox{0.5in}{\fbox{\includegraphics[width=1.75in]{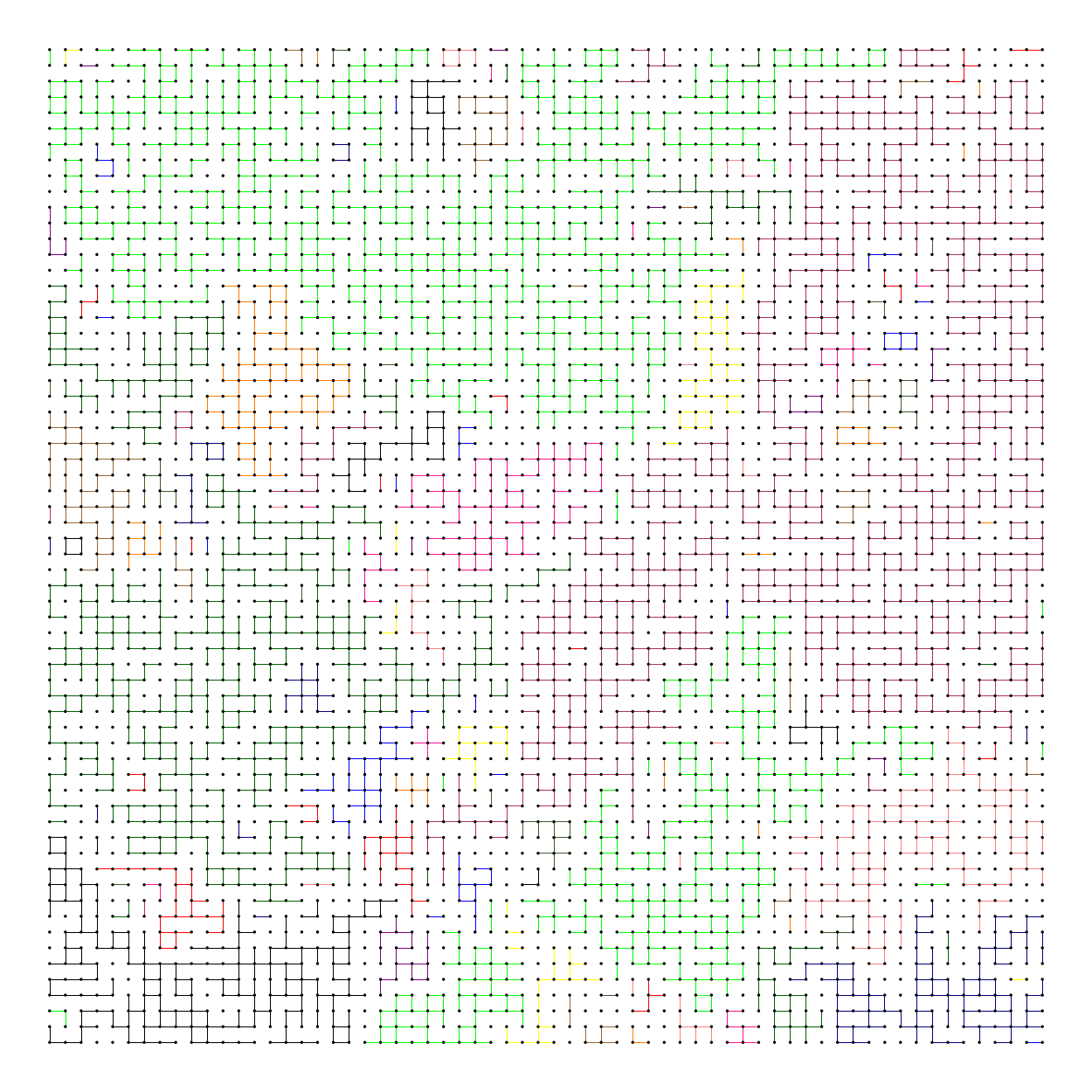}}}} \hfill
  {\,}
  \caption{The FK-representation for the critical Ising model.
  (A)~Critical Ising
  on a $256\times256$ lattice; $64\times64$ box highlighted.
  (B)~Coupled FK-configuration corresponding to highlighted box.}
\end{figure}

\subsection{Background and previous work}\label{sec:intro-previous}
In what follows we give a brief account of related work on the stochastic Ising model.
For a more extensive exposition, see e.g.~\cites{Liggett,Martinelli97}.

The classical Ising model on a finite box $\Lambda$ in the square lattice $\Z^2$ with no external field
is defined as follows. Its set of possible configurations is $\OmI=\{\pm1\}^\Lambda$, where each configuration
corresponds to an assignment of plus/minus spins to the sites in $\Lambda$. The probability that the system is in a
configuration $\sigma \in \OmI$ is given by the Gibbs distribution
\begin{equation}
  \label{eq-Ising}
  \mu_\Lambda(\sigma)  = \frac1{Z} \exp\left(\beta \sum_{u\sim v} \sigma(u)\sigma(v) \right) \,,
\end{equation}
where the partition function $Z$ is a normalizing constant.
The parameter $\beta$ is the inverse-temperature of the system; for $\beta \geq 0$ we say that the
model is ferromagnetic, otherwise it is anti-ferromagnetic.

These definitions extend to the infinite volume lattice $\Z^2$, where there is a critical point $\beta_c = \frac12\log(1+\sqrt{2})$ such that the Gibbs distribution is unique if and only if $\beta \leq \beta_c$.
We note that the focus of this work is restricted to the case of no external field ($h=0$) since otherwise there is no phase-transition in the above setting and the dynamics for the system is well understood.

Define the boundary of a set $\Lambda\subset V$, denoted by $\partial\Lambda$, as the neighboring sites of $\Lambda$ in $V\setminus\Lambda$ and call $\tau\in\{\pm1\}^{\partial \Lambda}$ a boundary condition. The Gibbs distribution conditioned on $\tau$ will be denoted by $\mu_\Lambda^\tau$, and put $\Lbar = \Lambda \cup \partial \Lambda$.
A periodic boundary condition on a box corresponds to a $2$-dimensional torus.

The Glauber dynamics for the Ising model is a family of continuous-time Markov chains on the state space $\OmI$, reversible with respect to the Gibbs distribution, given by the generator
\begin{equation}
  \label{eq-Glauber-gen}
  (\mathcal{L}f)(\sigma)=\sum_{x\in \Lambda} c(x,\sigma) \left(f(\sigma^x)-f(\sigma)\right)
\end{equation}
where $\sigma^x$ is the configuration $\sigma$ with the spin at $x$ flipped.
The transition rates $c(x,\sigma)$ are chosen to satisfy finite range interactions, detailed balance, positivity and boundedness and translation invariance (see Section~\ref{sec:prelim}).
As mentioned above, the two most notable examples for the choice of transition rates are
\begin{enumerate}[(i)]
\item \emph{Metropolis}: $  c(x,\sigma) = \exp\Big(2\beta\sigma(x)\sum_{y \sim x}\sigma(y)\Big)  \;\wedge\; 1\; $.
\item \emph{Heat-bath}:   $\;c(x,\sigma) = \bigg[1+ \exp\Big(-2\beta\sigma(x)\sum_{y \sim x}\sigma(y)\Big)\bigg]^{-1}\;$.
\end{enumerate}
These chains have intuitive and useful graphical interpretations: for instance, the heat-bath Glauber dynamics is equivalent to
updating the spins via i.i.d.\ rate-one Poisson clocks, each time resetting a spin and replacing it by a sample according to the conditional distribution given its neighbors.

Perhaps the most fundamental property of the dynamics is the gap in the spectrum of its generator, which governs the rate of convergence of the Glauber dynamics to equilibrium in $L^2(\mu)$.
 The spectral-gap is defined via the following Dirichlet form:
\begin{align}\label{eq-dirichlet-form}
\gap = \inf_f \frac{\mathcal{E}(f)}
{\var(f)}\,,
\end{align}
where the infimum is over all nonconstant $f\in L^2(\mu)$ and
\begin{align}
\mathcal{E}(f) &= \left<\mathcal{L} f,f\right>_{L^2(\mu)} = \frac12\sum_{\sigma,x} \mu(\sigma)c(x,\sigma)\left[f(\sigma^x)-f(\sigma)\right]^2\,.\label{eq-def-E(f)}
\end{align}
Alternatively, one may define the gap via the discrete-time analogue of the chain, in which case it is given in terms of the largest nontrivial eigenvalue of the transition kernel.


We next present a partial list of results obtained on the spectral-gap in various temperature regimes over the last half-century.

\subsubsection*{High temperature regime}
A series of breakthrough papers by Aizenman, Dobrushin, Holley, Shlosman, Stroock et al.\
(cf., e.g., \cites{AH,DobShl,Holley1,HoSt1,HoSt2,Liggett,LY,MO,MO2,MOS,SZ1,SZ2,SZ3,Zee1,Zee2}) beginning from the
late 1970's has developed the theory of the convergence rate of the Glauber dynamics to equilibrium.

Aizenman and Holley~\cite{AH} showed that the spectral gap of the dynamics on the infinite-volume lattice
is uniformly bounded whenever the Dobrushin-Shlosman uniqueness condition holds. Stroock and Zegarli{\'n}ski~\cites{Zee1,SZ1,SZ3}
 proved that the logartihmic-Sobolev constant is uniformly bounded given the Dobrushin-Shlosman mixing conditions (complete analyticity).
Finally, in 1994 Martinelli and Olivieri \cites{MO,MO2} extended this result to cubes under the more general condition of \emph{regular} complete analyticity, shown to hold for any $\beta<\beta_c$ in two dimensions.
 In particular, this confirmed the conjectured behavior of the inverse-gap in the high temperature regime in $\Z^2$.
This spectral-gap was shown to determine the precise asmyptotics of the $L^1$ mixing time of the dynamics in the recent work~\cite{LS}, where the authors established the cutoff phenomenon for the dynamics in this regime.

See the excellent surveys \cites{Martinelli97,Martinelli04} for further details.

\subsubsection*{Low temperature regime}
In this regime, in accordance with the conjectured critical slowdown behavior of the Ising model, the Glauber
dynamics under the free boundary condition was expected to converge exponentially slowly in the side-length.
This was first established by Schonmann~\cite{Schonmann} in 1987 for sufficiently low temperatures as a direct corollary from a large deviation estimate on the magnetization in a square. This large deviation result was extended to any $\beta > \beta_c$ by Chayes, Chayes and Schonmann~\cite{CCS}, implying that the inverse-gap under free boundary conditions is at least $\exp(-c n)$ for some $c=c(\beta) > 0$. This result, concurring with the projected behavior of the inverse-gap in this regime, also appeared explicitly and independently by Thomas~\cite{Thomas}.

In 1995, precise large deviation rate functions for the magnetization were established by Ioffe~\cite{Ioffe} for all $\beta > \beta_c$. Building on the work of Martinelli~\cite{Martinelli94} (treating low enough temperatures), this then culminated in remarkably sharp estimates for the inverse-gap throughout the low temperature regime in the work of Cesi, Guadagni, Martinelli and Schonmann~\cite{CGMS} in 1996.

\subsubsection*{Critical temperature}
Evidence that the inverse-gap is at least polynomial in the side-length at the critical temperature followed from
the polynomial decay in the spin-spin correlation, whose asymptotics were determined by Onsager~\cite{Onsager}.
On the other hand, despite the vast body of papers that accumulated on the non-critical regimes, no sub-exponential upper bound was known for the inverse-gap at criticality.

In fact, the only geometries where the inverse-gap of dynamics for the critical Ising model was shown to be polynomial (in the appropriate parameter for the given underlying geometry) were the complete graph~\cites{DLP1}, where the large symmetry renders the methods of proof useless for lattices, and the regular tree~\cite{DLP2}, whose non-amenability and non-transitivity also require geometry-specific methods. Even for these much simpler geometries the proofs were highly nontrivial.

Indeed, the complexity of the critical behavior can be witnessed by the scaling limit of the static Ising model. The understanding of the limit emerged in the last decade, pioneered by the introduction of the Schramm-Loewner Evolution (SLE) by Schramm~\cite{Schramm} (see~\cite{Werner1} for more on SLEs).
This powerful machinery has revolutionized the study of critical phenomena in two dimensions and allowed direct calculation of various critical exponents (see for instance \cites{LSW1,LSW2,Smirnov3} to name just a few).

Striking new results by Smirnov (cf.~\cites{Smirnov1,Smirnov2}) describe the full scaling limit of the Ising cluster interfaces at criticality as the Conformal Loop Ensemble (CLE) with parameter $\kappa=3$ (also see \cites{LW,Sheffield,Werner2}).

An important role in the development of the scaling limit theory for the Ising model was played by its counterpart, the Fortuin-Kasteleyn representation (formally defined in Section~\ref{sec:prelim}). Scaling limit results that were initially obtained for this model at criticality were thereafter translated to the corresponding Ising measure. For instance, the full ensemble of cluster interfaces in the FK-Ising model converges to a (nested) $\mathrm{CLE}_\kappa$ for $\kappa=16/3$.

Along side the ground breaking results on critical percolation models in two dimensions, new tools were developed to study conformally invariant systems. A recent application of this theory yielded Russo-Seymour-Welsh type estimates (\cites{Russo,SW}) for the crossing probability in a rectangle of bounded aspect ratio in the critical FK-Ising model under arbitrary boundary conditions, recently obtained by Duminil-Copin, Hongler and Nolin~\cite{DHN}.
See also~\cite{CS} for this result under a specific boundary condition, as well as~\cite{CN} for an argument inferring the general estimate from announced results on the convergence of the scaling limit of the spin cluster boundaries to $\mathrm{CLE}_3$ and from its Brownian loop soup representation.
As we later state, the RSW-estimate of~\cite{DHN} is a key ingredient in our proof.

As a consequence of Theorem~\ref{thm-1}, which verifies the conjectured polynomial behavior of the inverse-gap at criticality, the critical slowdown of the Ising model in 2 dimensions is now fully established.

\subsection{Polynomial mixing time under arbitrary boundary conditions}
Recall that Theorem~\ref{thm-1} bounded the inverse-gap for any fixed boundary condition.
Our proof in fact extends to free or periodic boundary conditions as well as mixed boundary conditions (see Theorem~\ref{thm-free-periodic}).
Furthermore, by a standard reduction the result also carries to the anti-ferromagnetic Ising model (see Corollary~\ref{cor-antiferro}).

To the best of our knowledge, this gives the first rigorous polynomial-time algorithm for approximately sampling the critical Ising model in $\Z^2$, also leading to an approximation of its partition-function, under \emph{arbitrary} (e.g.\ mixed) boundary conditions. (In the absence of boundary conditions Jerrum and Sinclair provided an efficient approximation scheme for the Ising partition-function on any graph in their celebrated work~\cite{JS}. Based on this algorithm Randall and Wilson~\cite{RW} gave an efficient approximate sampler for the Ising model on general graphs in 1999, applicable whenever the boundary conditions are free/all-plus/all-minus or periodic.)

\subsection{Efficient rigorous perfect simulation}
In their highly influential work~\cite{PW}, Propp and Wilson introduced the method
\emph{Perfect Simulation} (or Coupling From The Past) to exactly sample from the stationary measure of certain Markov chains. Perhaps their most prominent application for this method was sampling the Ising model at criticality via Glauber dynamics.

While the Propp-Wilson algorithm is guaranteed to produce a precise sample from the Ising measure,
it was not rigorously shown to be efficient, that is to have an expected running time polynomial in the size of the lattice. The only guarantee given for the running time was in terms of the total-variation mixing time, which was unknown at criticality. Note that experimental results found the method to be extremely effective in this regime (a fact that served as additional supporting evidence that the inverse-gap of the dynamics is polynomial at $\beta=\beta_c$).

By the new results in this work (namely, Corollary~\ref{cor-tmix}) we now have the first proof that the Propp-Wilson algorithm runs in polynomial time. This constitutes the first rigorously proven efficient algorithm (in addition to being very simple) for perfectly simulating the critical Ising model on the square lattice. Furthermore, this algorithm is valid under arbitrary boundary conditions.

\subsection{Main techniques}
A common feature of most analyses of the Glauber dynamics is to utilize the spatial-mixing properties of the static Ising model in order to control the mixing rate of the dynamical process.
In the high temperature regime, this is typically done by measuring the influence of individual boundary condition spins on sites and its rate of decay with distance. However, at the critical temperature, the slow decay of correlations precludes this approach. Indeed, as demonstrated by the spin-spin correlation result of Onsager and further illustrated by the existence with positive probability of ``large'' conformal loops as Ising cluster interfaces, there are long-range correlations at criticality which foil the standard coupling techniques.

Instead, we apply ideas from the study of the conformal invariance of the Ising model, and crucially the RSW-type estimate of~\cite{DHN}, to obtain the spatial-mixing result required for our analysis (Theorem~\ref{thm-spatial-mix}).
Rather than considering the effect of a single spin on the boundary, here we analyze the effect of an entire face of the boundary, deducing just enough spatial mixing to push our program through, with the help of
additional ingredients from the analysis of Markov chains, in particular the block-dynamics method~\cite{Martinelli97}.

\section{Preliminaries}\label{sec:prelim}
Throughout the paper we will use the notation $\llb x,y\rrb = [x,y]\cap \Z$, whereby for instance
a square lattice of side-length $n$ can be denoted by $\llb 1,n\rrb^2$.

\subsection{Glauber dynamics for the Ising model}
The Glauber dynamic for the Ising model on a finite box $\Lambda \subset \Z^2$, whose generator is given in \eqref{eq-Glauber-gen},
accepts any choice of transition rates $c(x,\sigma)$ which satisfy the following:
\begin{enumerate}[(1)]
\item \emph{Finite range interactions}: For some fixed $R>0$ and any $x \in \Lambda$, if $\sigma,\sigma' \in \OmI$ agree on the ball of diameter $R$ about $x$ then
$c(x,\sigma)=c(x,\sigma')$.
\item \emph{Detailed balance}: For all $\sigma\in \OmI$ and $x \in \Lambda$,
\[ \frac{c(x,\sigma)}{c(x,\sigma^x)} = \exp\Big(2\beta\sigma(x)\sum_{y \sim x}\sigma(y)\Big)\,.
\]
\item \emph{Positivity and boundedness}: The rates $c(x,\sigma)$ are uniformly bounded from below and above by some fixed $C_1,C_2 > 0$.
\item \emph{Translation invariance}: If $\sigma \equiv \sigma'(\cdot + \ell)$, where $\ell \in \Lambda$ and addition is according to the lattice metric,
then $c(x,\sigma) = c(x+\ell,\sigma')$ for all $x \in \Lambda$.
\end{enumerate}
The Glauber dynamics generator with such rates defines a unique Markov process, reversible with respect to the Gibbs measure $\mu_\Lambda^\tau$.

\subsection{Mixing in $L^1$ and in $L^2$}\label{subsec:prelim-mix-l1l2}
For any two distributions $\phi,\psi$ on $\Omega$, the \emph{total-variation distance} of $\phi$ and $\psi$ is defined as
\[\|\phi-\psi\|_\mathrm{TV} \deq \sup_{A \subset\Omega} \left|\phi(A) - \psi(A)\right| = \frac{1}{2}\sum_{x\in\Omega} |\phi(x)-\psi(x)|\,.\]
 The (worst-case) total-variation \emph{mixing-time} of an ergodic Markov chain $(X_t)$ with stationary distribution $\mu$, denoted by $\tmix=\tmix(1/\mathrm{e})$, is \[ \min\Big\{t : \max_{x \in \Omega} \| \P_x(X_t \in \cdot)- \mu\|_\mathrm{TV} \leq 1/\mathrm{e} \Big\}\,,\]
where $\P_x$ denotes the probability given that $X_0=x$. It is easy and well known (cf., e.g., \cite{SaloffCoste})
that $\tmix \leq \gap^{-1} \log\frac{\mathrm{e}}{\mu_{\min}}$, where $\mu_{\min}=\min_{x\in\Omega} \mu(x)$.
Together with Theorem~\ref{thm-1}, this immediately implies the following:
\begin{corollary}\label{cor-tmix}
Consider the critical Ising model on a finite box $\Lambda \subset \Z^2$ of side-length $n$, i.e.\ at inverse-temperature $\beta_c=\frac12\log(1+\sqrt{2})$.
Let $\tmix^\tau$ be the worst-case total-variation mixing time of the corresponding continuous-time Glauber dynamics
under an arbitrary fixed boundary condition $\tau$.
There exists an absolute constant $C>0$ (independent of $\Lambda,\tau$) such that $\tmix^\tau \leq n^C$.
\end{corollary}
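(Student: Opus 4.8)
The plan is to derive Corollary~\ref{cor-tmix} directly from Theorem~\ref{thm-1} by invoking the classical comparison between $L^2$ and $L^1$ (total-variation) convergence for reversible Markov chains. Specifically, for an ergodic chain reversible with respect to $\mu$ one has $\tmix \leq \gap^{-1}\log(\mathrm{e}/\mu_{\min})$, where $\mu_{\min}=\min_\sigma \mu(\sigma)$ (see, e.g., \cite{SaloffCoste}). Thus it suffices to control $\gap^{-1}$ through Theorem~\ref{thm-1} and to bound $\log(1/\mu_{\min})$ by a polynomial (in fact by $O(n^2)$) for the critical Ising measure on a box of side-length $n$ under an arbitrary fixed boundary condition $\tau$.

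First I would estimate $\mu_{\min}$ by a crude volume bound. Writing $\Lambda=\llb 1,n\rrb^2$, the number of edges of $\bar\Lambda$ with at least one endpoint in $\Lambda$ is at most $Cn^2$ for an absolute constant $C$, so for every $\sigma\in\OmI$ the energy $\sum_{u\sim v}\sigma(u)\sigma(v)$ (including the coupling to the frozen spins of $\tau$) lies in $[-Cn^2,Cn^2]$. Hence $\mathrm{e}^{-\beta_c Cn^2}\leq Z\,\mu_\Lambda^\tau(\sigma)\leq \mathrm{e}^{\beta_c Cn^2}$, and since there are at most $2^{n^2}$ configurations we get $Z\leq 2^{n^2}\mathrm{e}^{\beta_c Cn^2}$, whence $\mu_{\min}\geq 2^{-n^2}\mathrm{e}^{-2\beta_c Cn^2}$. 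Taking logarithms gives $\log(1/\mu_{\min})\leq c'n^2$ for an absolute constant $c'$, uniformly over $\tau$.

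Combining the two ingredients: by Theorem~\ref{thm-1} there is an absolute $C>0$ with $(\gap_\Lambda^\tau)^{-1}\leq n^C$, so
\[
\tmix^\tau \;\leq\; (\gap_\Lambda^\tau)^{-1}\log\frac{\mathrm{e}}{\mu_{\min}} \;\leq\; n^C\,(c'n^2+1) \;\leq\; n^{C+3}
\]
for all $n$ sufficiently large, and enlarging the constant covers the finitely many small $n$ as well. This yields $\tmix^\tau\leq n^{C'}$ with $C'$ an absolute constant independent of $\Lambda$ and $\tau$.

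There is essentially no obstacle here beyond Theorem~\ref{thm-1} itself: the argument is a routine application of a standard inequality together with a trivial bound on the partition function. The only point requiring minor care is to ensure that \emph{both} estimates — $(\gap_\Lambda^\tau)^{-1}\leq n^C$ and $\log(1/\mu_{\min})\leq c'n^2$ — hold with constants independent of $\tau$, which is the case since the edge count and the range of the Hamiltonian depend only on the geometry of $\Lambda$. The same reasoning applies verbatim in the free, periodic, or mixed settings once the corresponding analogue of Theorem~\ref{thm-1} is in hand.
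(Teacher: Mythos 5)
Your proposal is correct and follows essentially the same route as the paper: the paper also deduces the corollary immediately from Theorem~\ref{thm-1} via the standard bound $\tmix \leq \gap^{-1}\log(\mathrm{e}/\mu_{\min})$ from~\cite{SaloffCoste}, with the observation that $\log(1/\mu_{\min}) = O(n^2)$ for the Ising measure on a box (a step the paper leaves implicit and you spell out correctly).
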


\subsection{Single-site vs.\ Block dynamics}
Let $\cB= \{B_1,\ldots,B_k\}$ where the $B_i$'s are subsets of sites to be referred to as ``blocks''.
The (continuous-time) \emph{block dynamics} corresponding to $\cB$ is the following Markov chain: Each
block corresponds to a rate-one Poisson clock and upon it ringing we update the block according to the stationary distribution given the rest of the system. That is,
the entire set of spins of the chosen block is updated simultaneously, whereas all other spins remain unchanged.
One can verify that this dynamics is reversible with respect to the Gibbs distribution.

The following proposition reveals the remarkable connection between the single-site dynamics and the block dynamics.
\begin{proposition}[\cite{Martinelli97}*{Proposition 3.4}]\label{prop-block-single}
Consider the Glauber dynamics for the Ising model on $\Lambda\subset \Z^2$ with boundary condition $\tau$.
Let $\gap_\Lambda^\tau$ be the spectral-gap of the single-site
dynamics on $\Lambda$ and $\gap_\mathcal{B}^\tau$ be the spectral-gap of the block dynamics corresponding to $B_1,\ldots,B_k$, an arbitrary cover of $\Lambda$.
 The following holds:
\begin{equation*}
\gap_\Lambda^\tau \geq
\frac{\sum_{\sigma} \mu_\Lambda^\tau(\sigma)\sum_{x\in\Lambda} c(x,\sigma)[f(\sigma^x)-f(\sigma)]^2}
{\sum_{\sigma} \mu_\Lambda^\tau(\sigma)\sum_{x\in\Lambda} N_x c(x,\sigma)[f(\sigma^x)-f(\sigma)]^2}
\gap_\mathcal{B}^\tau \inf_i \inf_{\varphi} \gap_{B_i}^\varphi
\,,
\end{equation*}
where $N_x = \#\{i:B_i \ni x\}$. In particular,
\begin{equation*}
  \gap_\Lambda^\tau \geq \Big(\sup_{x\in \Lambda} \#\{i:B_i \ni x\}\Big)^{-1} \gap_\mathcal{B}^\tau \inf_i \inf_{\varphi} \gap_{B_i}^\varphi \,.
\end{equation*}
\end{proposition}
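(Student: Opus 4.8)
The plan is to carry out the standard two-level Dirichlet-form comparison underlying the block-dynamics method: first bound the variance of a test function by the block-dynamics Dirichlet form, then bound each block's conditional variance by that block's single-site Dirichlet form, and finally reassemble the local pieces into the global single-site Dirichlet form, paying the overlap multiplicity $N_x = \#\{i : B_i \ni x\}$ at every site.

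First I would take $f$ to be a function realising the infimum in the Rayleigh quotient for $\gap_\Lambda^\tau$ (the state space is finite, so a minimiser exists), normalised so that $\E_{\mu_\Lambda^\tau} f = 0$; thus $\mathcal{E}_\Lambda^\tau(f) = \gap_\Lambda^\tau\, \var_{\mu_\Lambda^\tau}(f)$, where $\mathcal{E}_\Lambda^\tau$ is the single-site Dirichlet form from \eqref{eq-def-E(f)} (with $\mu=\mu_\Lambda^\tau$). Each heat-bath block update of $B_i$ contributes to the block-dynamics Dirichlet form a conditional-variance term, so $\mathcal{E}_\mathcal{B}^\tau(f) = \sum_i \E_{\mu_\Lambda^\tau}\big[\var_{\mu_\Lambda^\tau}(f \mid \sigma_{\Lambda \setminus B_i})\big]$, and the variational characterisation of $\gap_\mathcal{B}^\tau$ yields
\[
  \var_{\mu_\Lambda^\tau}(f) \;\le\; (\gap_\mathcal{B}^\tau)^{-1} \sum_i \E_{\mu_\Lambda^\tau}\big[\var_{\mu_\Lambda^\tau}(f \mid \sigma_{\Lambda \setminus B_i})\big]\,.
\]

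Next, by the finite-range (Markov) property of the Gibbs measure, the conditional law of the spins in $B_i$ given the exterior configuration $\varphi = \sigma_{\Lambda\setminus B_i}$ is exactly $\mu_{B_i}^\varphi$; hence $\var_{\mu_\Lambda^\tau}(f \mid \sigma_{\Lambda\setminus B_i}) = \var_{\mu_{B_i}^\varphi}(f)$, viewing $f$ as a function of the spins in $B_i$. Applying the spectral gap of the single-site dynamics on $B_i$ under boundary condition $\varphi$, bounding it from below by $\inf_j \inf_\psi \gap_{B_j}^\psi$, and then taking expectation over $\varphi$ and using the tower property gives
\[
  \E_{\mu_\Lambda^\tau}\big[\var_{\mu_\Lambda^\tau}(f \mid \sigma_{\Lambda\setminus B_i})\big] \;\le\; \Big(\inf_j \inf_\psi \gap_{B_j}^\psi\Big)^{-1} \cdot \tfrac12 \sum_\sigma \mu_\Lambda^\tau(\sigma) \sum_{x \in B_i} c(x,\sigma)\big[f(\sigma^x)-f(\sigma)\big]^2\,.
\]
The one genuinely delicate point is the identity $\E_{\mu_\Lambda^\tau}\big[\mathcal{E}_{B_i}^\varphi(f)\big] = \tfrac12 \sum_\sigma \mu_\Lambda^\tau(\sigma) \sum_{x \in B_i} c(x,\sigma)[f(\sigma^x)-f(\sigma)]^2$ used above: it requires that the single-site rates $c(x,\cdot)$ for $x \in B_i$, computed inside the block with boundary $\varphi$, agree with the global rates on $\Lambda$. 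This is precisely what the finite-range interaction and detailed-balance assumptions on $c$ guarantee, since the rate at $x$ depends only on spins in a bounded neighbourhood of $x$ (contained in $\bar B_i$ for a suitable choice of blocks) and is pinned down by the nearest-neighbour interaction; this is where the structure of the dynamics enters in an essential way.

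Finally, summing the previous display over $i$ and observing that each site $x$ is counted once for every block containing it, the right-hand sides combine into $\big(\inf_j \inf_\psi \gap_{B_j}^\psi\big)^{-1} \cdot \tfrac12 \sum_\sigma \mu_\Lambda^\tau(\sigma) \sum_{x\in\Lambda} N_x\, c(x,\sigma)[f(\sigma^x)-f(\sigma)]^2$. Feeding this into the first display, substituting $\var_{\mu_\Lambda^\tau}(f) = \mathcal{E}_\Lambda^\tau(f)/\gap_\Lambda^\tau$, and rearranging yields exactly the claimed inequality; the ``in particular'' bound then follows since $N_x \le \sup_{x\in\Lambda}\#\{i : B_i \ni x\}$ for every $x$. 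Apart from the bookkeeping around the block-rate identity flagged above, the argument is a routine chaining of the two variational inequalities.
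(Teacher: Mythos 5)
Your argument is correct and is exactly the standard two-level Dirichlet-form comparison (block gap, then conditional variance via each block's single-site gap, then reassembly with multiplicity $N_x$) that underlies the cited result; the paper itself does not prove this proposition but imports it from Martinelli's lecture notes, whose proof follows the same route. Your reading of the displayed inequality via the minimising $f$ of the Rayleigh quotient, and your handling of the block-rate identity through the finite-range property and the DLR/Markov structure of $\mu_\Lambda^\tau$, are the right bookkeeping points and are handled adequately.
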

A notable example for key properties of the Glauber dynamics that were established using this approach is the
estimate on the spectral-gap throughout the high temperature regime (cf., e.g., \cite{Martinelli97}).

\subsection{FK model}
Invented by Fortuin and Kasteleyn~\cite{FK} around 1969, the FK-model (also known as the \emph{random-cluster} model)
with parameter $q$ for a graph $\Lambda$ with edge set $E$ is defined as follows.
Its state space is all assignments of open/closed values to the edges (resp. $1$ and $0$ values),
where the probability of a configuration $\omega\in\{0,1\}^E$ is given by the FK-measure:
\[ \nu_\Lambda(\omega) = \frac1{Z} p^{o(\omega)}(1-p)^{c(\omega)}q^{k(\omega)}\,,\]
where $o(\omega),c(\omega)$ denote the number of open and closed edges in $\omega$ resp., $k(\omega)$ counts the number of clusters and $Z$ is a normalizing constant.

The case $q=2$ is closely related to the ferromagnetic Ising measure, as detailed by the following coupling due to Edwards and Sokal~\cite{ES}, hence this special case is also known as FK-Ising (or the FK-representation of the Ising model).
Let $\OmFK$ and $\OmI$ denote the state spaces of the Ising and FK measures respectively; the joint probability of $\sigma\in\OmI$ and $\omega\in\OmFK$ according to the coupling is
\[ \P(\sigma,\omega) \propto \prod_{e\in E} \left[(1-p)\one_{\{\omega(e)=0\}} + p\one_{\{\omega(e)=1\}}\one_{\{\sigma(x)=\sigma(y)\}}\right]\,,\]
where the relation between $0<p<1$ and $\beta>0$ is given by
\[ p = 1-\exp(-2\beta)\]
(note that this sometimes appears as $p=1-\exp(-\beta)$, e.g.\ in the context of Potts measures, resulting from a slightly different normalization of the Ising Hamiltonian). Unless stated otherwise, set $\beta$ and $p$ to their critical values \[\beta=\beta_c=\frac12\log(1+\sqrt{2})~,~p=p_c=p_\mathrm{sd} = \frac{\sqrt{2}}{1+\sqrt{2}}\,.\]
A useful corollary of the coupling is that an Ising configuration $\sigma \sim \mu_\Lambda$ can be obtained from the FK-Ising representation $\omega\sim\nu_\Lambda$ by
assigning i.i.d.\ uniform spins to the clusters.

One can also consider the FK-measure under boundary conditions. Here the constraints are ``wiring'' of sites,
thereby affecting the cluster-structure of $\omega$ (wired sites are part of the same cluster). Given a wiring $\tau$ we denote by $\nu^\tau_\Lambda$ the FK-measure with respect to this boundary condition.
The two extreme boundary conditions are the ``wired'' $\tau=1$ (all boundary vertices are pair-wise connected) and the ``free'' $\tau=0$ (no wiring) boundary conditions.

The FK-model is monotone with respect to boundary conditions: In the natural partial order on the wirings, whenever $\xi\leq \eta$ the measure $\nu_\Lambda^\eta$ stochastically dominates $\nu_\Lambda^\xi$.
This property is particularly useful when combined with the so-called \emph{Domain Markov} property, asserting that
$ \nu^\xi_\Lambda\left(\omega\in\cdot\mid \omega(\Delta)\right)$ for some subset of the edges $\Delta\subset E$ is
precisely the FK-measure on the remaining (unexposed) edges under the boundary condition induced by
the wirings of $\xi$ and $\omega(\Delta)$.

A final ingredient which we will need is the FKG-inequality, stated next for the special case
of the FK-measure:
\begin{theorem}[\cites{FKG,Holley2}, special case]
For any graph $\Lambda$ and boundary condition $\xi$, if $X,Y:\OmFK\to\R$
are two increasing (decreasing) functions then
\[ \nu_\Lambda^\xi(X Y) \geq\nu_\Lambda^\xi(X) \nu_\Lambda^\xi(Y)\,.\]
\end{theorem}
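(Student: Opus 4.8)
The plan is to deduce this from Holley's criterion: a strictly positive probability measure $\nu$ on $\{0,1\}^E$ that obeys the \emph{FKG lattice condition}
\[
\nu(\omega\vee\omega')\,\nu(\omega\wedge\omega')\;\geq\;\nu(\omega)\,\nu(\omega')\qquad\text{for all }\omega,\omega'\in\{0,1\}^E,
\]
where $\vee$ and $\wedge$ denote coordinatewise maximum and minimum, is positively associated, i.e.\ satisfies $\nu(XY)\geq\nu(X)\nu(Y)$ for all increasing $X,Y$. This reduction is exactly Holley's inequality \cite{Holley2} cited in the statement; it may itself be proved by a monotone coupling of two copies of a reversible $\{0,1\}^E$-valued chain with stationary law $\nu$, or from the Ahlswede--Daykin four-functions theorem. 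Granting it, the theorem reduces to verifying the lattice condition for $\nu_\Lambda^\xi$ (which is strictly positive when $0<p<1$), and the decreasing case follows from the increasing one by replacing $X,Y$ with $-X,-Y$, since $XY=(-X)(-Y)$ and the means negate.

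To check the lattice condition I would first remove the $p$-dependence: because $(\omega\vee\omega')(e)+(\omega\wedge\omega')(e)=\omega(e)+\omega'(e)$ for every edge $e$, the open- and closed-edge counts satisfy $o(\omega\vee\omega')+o(\omega\wedge\omega')=o(\omega)+o(\omega')$ and likewise for $c(\cdot)$, so the factors $p^{o(\cdot)}(1-p)^{c(\cdot)}$ cancel on the two sides. Writing $k_\xi(\cdot)$ for the number of clusters of a configuration in $\Lambda$ with the boundary vertices pre-wired according to $\xi$, the lattice condition for $\nu_\Lambda^\xi$ thus becomes $q^{k_\xi(\omega\vee\omega')+k_\xi(\omega\wedge\omega')}\geq q^{k_\xi(\omega)+k_\xi(\omega')}$; as $q=2>1$ it suffices to prove the supermodularity
\[
k_\xi(\omega\vee\omega')+k_\xi(\omega\wedge\omega')\;\geq\;k_\xi(\omega)+k_\xi(\omega')\,.
\]

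The supermodularity I would establish by the usual edge-by-edge argument. Identifying a configuration with its set of open edges, the elementary fact is that for edge sets $A\subseteq B$ (in the $\xi$-wired graph) and an edge $e\notin B$ one has $k_\xi(A\cup\{e\})-k_\xi(A)\leq k_\xi(B\cup\{e\})-k_\xi(B)$, each side lying in $\{-1,0\}$: adding $e$ lowers the cluster count of $B$ only if its endpoints lie in different clusters of $B$, and then they lie in different clusters of the refinement $A$ as well. Now set $S=\omega\vee\omega'=\omega\cup\omega'$ and $T=\omega\wedge\omega'=\omega\cap\omega'$, enumerate the edges of $\omega\setminus\omega'$ in any order, and add them one at a time both to $T$ (reaching $\omega$) and to $\omega'$ (reaching $S$); since $T\subseteq\omega'$, at each step the lower set sits inside the upper one and the new edge lies outside the upper set, so summing the per-edge inequalities telescopes to $k_\xi(\omega)-k_\xi(T)\leq k_\xi(S)-k_\xi(\omega')$, which is the claim. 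I do not expect a real obstacle; the only point needing care is that all cluster counts are taken in the graph with the $\xi$-wiring already imposed, so the refinement argument stays valid under boundary conditions, and if a self-contained treatment were wanted the sole non-elementary ingredient would be Holley's inequality (equivalently, Ahlswede--Daykin) used in the first step.
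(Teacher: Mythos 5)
Your proof is correct, and it is the standard argument: the paper itself does not prove this statement (it is quoted as a special case of \cites{FKG,Holley2}), so there is no internal proof to compare against. Your route — reduce to the FKG lattice condition via Holley's criterion, cancel the $p^{o(\cdot)}(1-p)^{c(\cdot)}$ factors using $o(\omega\vee\omega')+o(\omega\wedge\omega')=o(\omega)+o(\omega')$, and verify supermodularity of the cluster count $k_\xi$ by the edge-by-edge refinement argument (valid verbatim in the $\xi$-wired graph, since wiring just means counting clusters in a quotient graph) — is exactly how the cited references and, e.g., Grimmett's random-cluster book establish positive association for $q\geq 1$, which covers the paper's case $q=2$; the decreasing case by negating both functions is also fine.
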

Throughout the paper we will use the term \emph{cluster} for its usual interpretation as a maximal subset of sites connected by open paths, including via the boundary wiring. As a distinction, we will use the term \emph{component} to refer to a maximal subset of sites that are connected by open paths without using the boundary wiring.

See e.g.~\cite{Grimmett} and~\cite{Alexander} for further information on the FK-model.

\section{Polynomial mixing at criticality}

In this section we provide the proof of Theorem~\ref{thm-1}, establishing that the inverse-gap of the Glauber dynamics for the critical Ising model on a finite square lattice is polynomial in the side-length.
We will in fact prove the following stronger version of Theorem~\ref{thm-1} which allows for rectangles with unbounded aspect-ratio and bounds the inverse-gap solely in terms of their shorter side-length.
\begin{theorem}\label{thm-inverse-gap}
Consider the critical Ising model on a box $\Lambda \subset \Z^2$ with dimensions $m\times n$.
Let $\gap_\Lambda^\tau$ be the spectral-gap in the generator of the corresponding continuous-time Glauber dynamics
under an arbitrary fixed boundary condition $\tau$.
There exists an absolute constant $C>0$ (independent of $\Lambda,\tau$) such that for any $m=m(n)$ we have $(\gap_\Lambda^\tau)^{-1} \leq n^C$.
\end{theorem}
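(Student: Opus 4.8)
The plan is to induct on the shorter side-length $n$, using the single-site/block comparison of Proposition~\ref{prop-block-single} to bound the spectral gap on a rectangle in terms of the gap of an auxiliary block dynamics and the gaps of a bounded number of strictly smaller rectangles, and to control the block dynamics by the spatial-mixing estimate of Theorem~\ref{thm-spatial-mix}, itself extracted from the RSW-type crossing bounds of~\cite{DHN}. Since the rates $c(x,\sigma)$ are bounded above and below, comparing Dirichlet forms lets us freely change the choice of rates and replace single-site dynamics by coarser block dynamics at the cost of absolute multiplicative constants; in particular the base case $n=O(1)$ is immediate, the state space having bounded size.

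The first step is a \emph{stretching lemma} that removes the dependence on the aspect ratio: for any $m\times n$ rectangle with $m\ge 10n$ and any boundary condition, $(\gap^\tau_\Lambda)^{-1}\le A\,(\gap^{\tau'}_{\Lambda'})^{-1}$ for an absolute constant $A$ and some $(10n)\times n$ rectangle $\Lambda'$ and boundary condition $\tau'$. To prove it, cover $\Lambda$ by $k=O(m/n)$ overlapping sub-rectangles $B_1,\dots,B_k$ of dimensions $\asymp n\times n$ strung along the long axis so that each consecutive pair shares a full-height column. Because that column is a separating set, at the level of the blocks this dynamics is a one-dimensional chain, and a coupling (disagreement-percolation) argument shows that its spectral gap is bounded below by a constant depending only on the RSW constant $c^+<1$, \emph{uniformly in $k$ and $n$}: two copies can come to disagree on a block only when a block resampling fails to couple, an event whose probability Theorem~\ref{thm-spatial-mix} — via the Edwards--Sokal coupling and an insulating closed FK-crossing, top-to-bottom, of a bounded-aspect-ratio band, guaranteed with probability $\ge 1-c^+>0$ by~\cite{DHN} — bounds by $c^+$ independently of $n$ (grouping a bounded number $\ell=\ell(c^+)$ of consecutive blocks first, should $c^+\ge\tfrac12$). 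Proposition~\ref{prop-block-single}, applied with $\sup_x N_x=O(1)$ and with each block an $\asymp n\times n$ rectangle, then yields the stretching lemma.

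Granting this, the induction runs over rectangles of aspect ratio at most $10$. Let $h(n)$ denote the supremum of $(\gap^\tau_\Lambda)^{-1}$ over all such rectangles with both dimensions at most $n$ and all $\tau$. Given such a $\Lambda$, cover it by a bounded (say $2\times2$) grid of overlapping sub-rectangles $B_1,\dots,B_r$, each of side-lengths at most $\lceil 2n/3\rceil$ and each overlap a full cross-section of the relevant block; then $\sup_x N_x=O(1)$, $\sup_i\sup_\varphi(\gap^\varphi_{B_i})^{-1}\le h(\lceil 2n/3\rceil)$, and $\gap_{\mathcal B}\ge c_0>0$ by a short systematic-scan coupling: when a block is resampled, Theorem~\ref{thm-spatial-mix} lets us couple the two copies to agree on the bulk of that block with probability $\ge1-c^+$ (an insulating open FK-circuit inside the block, from~\cite{DHN}), so the $O(1)$ updates contract at a rate depending only on $c^+$. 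Hence $h(n)\le K\,h(\lceil 2n/3\rceil)$ with $K$ absolute, giving $h(n)\le n^{C}$ with $C=\log_{3/2}K=O\!\big(\log\tfrac1{1-c^+}\big)$; combined with the stretching lemma this bounds $(\gap^\tau_\Lambda)^{-1}$ by $n^C$ for every $m\times n$ rectangle, proving Theorem~\ref{thm-inverse-gap}, and the square case is Theorem~\ref{thm-1}.

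The main obstacle is Theorem~\ref{thm-spatial-mix}, and more precisely packaging the RSW input of~\cite{DHN} so that it applies under the \emph{arbitrary} boundary conditions that appear after conditioning on blocks — here the Domain Markov property and FKG monotonicity of the FK-measure are what let one sandwich the exposed configuration between the wired and free boundary conditions on bounded-aspect-ratio rectangles — and so that a single crossing event with probability bounded away from $1$ genuinely suffices to drive both the one-dimensional-chain estimate of the stretching lemma and the finite-block contraction of the induction. The remaining ingredients — the rate comparisons, the gap bound for a one-dimensional block chain, and the finite-block coupling — are classical and affect only the value of $C$.
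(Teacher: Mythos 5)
Your overall architecture (reduce to bounded aspect ratio, then recurse on boxes shrinking by a factor $2/3$, powering both steps by Proposition~\ref{prop-block-single} and Theorem~\ref{thm-spatial-mix}) is in the spirit of the paper, but the two coupling estimates on which it rests are asserted rather than proved, and as formulated they do not follow from the cited ingredients. The central problem is a misreading of what Theorem~\ref{thm-spatial-mix} delivers: when a block bordering a disagreement is resampled, the theorem only couples the portion of the block at distance at least $\rho\cdot(\text{width})$ from the disagreeing side; the strip near that side will generically \emph{disagree} after the update with probability close to $1$, not at most $c^+$. Hence the statement ``two copies can come to disagree on a block only when a block resampling fails to couple'' is false at the block level, and the block-level infection process is not dominated by a subcritical contact process in the way you claim: there is no healing mechanism, since a block adjacent to a disagreement is never coupled in full when updated. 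With your stated geometry (consecutive blocks sharing a single full-height column) this is fatal, because the uncoupled strip of width $\asymp\rho n$ always contains the neighbouring block's boundary column, so under this bookkeeping the disagreement ping-pongs across each shared column forever; a renormalized one-dimensional disagreement argument would at the very least require overlaps much wider than the coupling scale, a confinement bookkeeping of disagreements relative to interface strips, and a genuine uniform-in-$k$ subcriticality argument, none of which is supplied. The same issue undermines the bounded-aspect-ratio step: with a $2\times2$ grid and only ``bulk'' coupling inside an FK circuit, the residual disagreement near the inner boundaries of updated blocks re-infects later updates, so $O(1)$ updates need not coalesce the chains; moreover Theorem~\ref{thm-spatial-mix} treats disagreement on a single side only, and ``each overlap a full cross-section'' is incompatible with a $2\times2$ grid (with full cross-sections only one dimension shrinks, and one must alternate directions). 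The paper's choice of exactly \emph{two} overlapping blocks is precisely what makes a single successful one-sided coupling render the entire boundary of the other block agreeing, after which the next update couples perfectly and the chains coalesce.

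The unbounded aspect ratio is where your sketch most needs, and lacks, a real argument. A left-to-right sweep over $k\asymp m/n$ blocks with per-block coupling failure $\epsilon$ succeeds only with probability $(1-\epsilon)^{k}$, so a failure probability that is merely a small constant does not yield a gap for the block chain that is uniform in $k$ by soft means; and if instead one runs the halving recursion with the crude congestion constant $2$ from Proposition~\ref{prop-block-single}, the $\asymp\log(m/n)$ halvings cost a factor $\asymp m/n$, which is not polynomial in $n$ once $m$ is super-polynomial in $n$. The paper threads this needle differently: at each step it uses only two blocks whose overlap has height $\tfrac13\sqrt{rr'}$, so the coupling failure probability is $\exp(-\delta\sqrt{r'/r})$ (Claim~\ref{clm-block-dyn}), and it averages over the $\lfloor\sqrt{r'/r}\rfloor$ admissible positions of the overlap to replace the congestion factor $2$ by $1+1/\lfloor\sqrt{r'/r}\rfloor$; the product of the per-step factors over all halvings then converges to an absolute constant, which is exactly what makes the final bound depend only on the shorter side. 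Your ``stretching lemma,'' if true, would serve the same purpose, but it is the hard part of the proof rather than a consequence of disagreement percolation as described, so as it stands the proposal has a genuine gap at both of its key steps.
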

For instance, the inverse-gap for the critical Ising model on an extremely long rectangle, e.g.\ an $n\times \exp(\exp(n))$ box, is bounded by the exact same $n^C$ bound given for the square.

\subsection{A spatial-mixing result and proof of Theorem~\ref{thm-inverse-gap}}
We derive our bound on the inverse-gap from the following spatial-mixing result which is of independent interest.  Its proof is given in Section~\ref{s:spatialProof}.
\begin{theorem}\label{thm-spatial-mix}
Let $\Lambda = \llb 1,r \rrb \times \llb 1 , r'\rrb$ for some integers $r,r'$ satisfying $r'/r > \alpha > 0$ with $\alpha$ fixed and let
$\Lambda_\btop = \llb 1,r \rrb \times \llb \rho r ,r' \rrb$ for some $\rho$ satisfying $\alpha \leq \rho<r'/r$.
Let $\xi,\eta$ be two boundary conditions on $\Lambda$ that differ only on the bottom boundary $\llb 1,r\rrb \times \{0\}$.
Then \[ \left\| \mu^\xi_\Lambda(\sigma(\Lambda_\btop)\in\cdot) - \mu^\eta_\Lambda(\sigma(\Lambda_\btop)\in\cdot) \right\|_\tv \leq \exp(-\delta \rho)\,,\]
where $\delta > 0$ is a constant that depends only on $\alpha$.
\end{theorem}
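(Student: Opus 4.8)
The plan is to show that the influence of the bottom boundary decays geometrically as one moves up into $\Lambda$, by exploiting the coupling with the critical FK-Ising model and the RSW-estimate of~\cite{DHN}. The key geometric object is the following: work in the FK-representation and let $\mathcal{C}$ be the event that there is an open dual circuit (equivalently, a closed primal path in the appropriate sense) separating the bottom boundary $\llb 1,r\rrb\times\{0\}$ from the region $\Lambda_\btop = \llb 1,r\rrb\times\llb\rho r, r'\rrb$ — more precisely a dual crossing of the strip $\llb 1,r\rrb\times\llb \rho' r,\rho'' r\rrb$ from left to right for appropriate $\rho'<\rho''<\rho$, which together with the side boundaries blocks any open path from the bottom to $\Lambda_\btop$. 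On the event $\mathcal{C}$, the cluster structure inside $\Lambda_\btop$ is conditionally independent of the bottom boundary condition (the Domain Markov property), and since the Ising spins are obtained from the FK-clusters by i.i.d.\ uniform colouring, conditioning on $\mathcal{C}$ makes $\sigma(\Lambda_\btop)$ independent of the discrepancy. Hence the total-variation distance is bounded by $\max_{\zeta\in\{\xi,\eta\}}\nu^\zeta_\Lambda(\mathcal{C}^c)$ after a standard coupling argument (couple the two FK-configurations to agree above the blocking circuit, which is possible by monotonicity and Domain Markov once we have a common circuit; the colouring of the shared clusters is then identical).

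Next I would estimate $\nu^\zeta_\Lambda(\mathcal{C}^c)$. Tile the vertical strip between the bottom boundary and $\Lambda_\btop$ — which has height at least $(\rho-\alpha)r$, hence of order $\rho r$ — into roughly $c\rho$ disjoint horizontal sub-rectangles, each of bounded aspect ratio (say $r \times (r/2)$, so aspect ratio $1/2$). By the RSW-estimate of~\cite{DHN}, in each such sub-rectangle the probability of a dual left-right crossing is at least some constant $c^+>0$, uniformly over boundary conditions; by duality at $p_\mathrm{sd}$ this is exactly an RSW-type lower bound on the crossing probability in the dual model, and the theorem of~\cite{DHN} applies under arbitrary boundary conditions, which is precisely what we need since the configuration outside the sub-rectangle is arbitrary. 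A dual left-right crossing in any one of these stacked sub-rectangles, combined with the portions of the (possibly wired) side boundaries, yields the desired separating structure and hence implies $\mathcal{C}$. The crossing events in disjoint sub-rectangles are not independent, but by the Domain Markov property and monotonicity (FKG / the fact that a left-right dual crossing is a decreasing event in the primal configuration), conditioning on the lower sub-rectangles failing to have a dual crossing only helps — the conditional probability of a dual crossing in the next sub-rectangle, given everything below, is still at least $c^+$. Multiplying, $\nu^\zeta_\Lambda(\mathcal{C}^c) \leq (1-c^+)^{\lfloor c\rho\rfloor} \leq \exp(-\delta\rho)$ with $\delta = \delta(\alpha) = c\log\frac{1}{1-c^+}>0$, which gives the claim.

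The main obstacle I expect is setting up the blocking geometry so that (i) the separating structure genuinely decouples $\Lambda_\btop$ from the bottom boundary under \emph{arbitrary} (including wired) side and top boundary conditions, and (ii) the conditioning-from-below monotonicity argument is applied to the correct monotone event. Point (i) requires care because wired side boundaries can connect the bottom to the top "around" a horizontal dual crossing; this is why one wants the dual crossing to run all the way across $\llb 1,r\rrb$ and to combine it with the dual vertices adjacent to the side boundaries, so that the union is a genuine dual arc separating top from bottom in $\Lbar$ — one should check this topological point explicitly. Point (ii) is standard but must be phrased using the Domain Markov property together with the correct direction of FKG monotonicity: revealing that the dual model \emph{failed} to cross the lower strips corresponds to revealing that certain primal edges are open, which under the FK-measure only decreases the conditional law of the remaining edges, hence only increases the (decreasing) dual-crossing probability in the next strip. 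The rest — the coupling producing the TV bound from $\nu^\zeta_\Lambda(\mathcal{C}^c)$, and the arithmetic turning $(1-c^+)^{\Theta(\rho)}$ into $\exp(-\delta\rho)$ — is routine.
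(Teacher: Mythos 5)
Your overall strategy (pass to the FK representation, block the influence of the bottom boundary by a dual horizontal crossing, and get exponential decay by iterating the RSW bound of~\cite{DHN} over $\Theta(\rho)$ stacked strips) is the same skeleton as the paper's proof, but there is a genuine gap at its foundation: you never say what the measure $\nu^\zeta_\Lambda$ for a \emph{mixed} Ising boundary condition $\zeta\in\{\xi,\eta\}$ actually is. An arbitrary fixed Ising boundary condition does not correspond to an FK boundary wiring; via Edwards--Sokal it corresponds to wiring the plus sites $\cP(\zeta)$ pairwise and the minus sites $\cM(\zeta)$ pairwise and then \emph{conditioning} on the event $A^\zeta$ that no open path joins $\cP(\zeta)$ to $\cM(\zeta)$ --- an event whose probability can be exponentially small. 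This conditioned measure is not an FK measure, so every tool you invoke for it is unjustified as stated: the RSW estimate of~\cite{DHN} is proved for arbitrary \emph{wirings}, not for FK measures conditioned on a connection event; the Domain Markov property now produces a conditional law that carries an extra conditioning on ``no $\cP$--$\cM$ connection through the unexposed edges''; and the monotone coupling ``above the blocking circuit'' needs a comparison between the two conditioned measures. The paper's proof spends essentially all of its technical effort exactly here: it explores the cluster $\Xi$ of the bottom boundary jointly under the fully wired measure and the two conditioned measures, proves a stochastic domination (using that $A^\zeta$ is a decreasing event, via FKG) so that confinement of $\Xi$ under the wired measure --- where the iterated RSW bound legitimately applies --- transfers to the conditioned measures, and then checks that after the exploration the two conditioned laws on the unexposed edges coincide because the residual conditioning events agree. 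Without some version of this, your bound $\nu^\zeta_\Lambda(\mathcal{C}^c)\le\exp(-\delta\rho)$ and your decoupling claim on $\mathcal{C}$ are not established. (Relatedly, the spins are not assigned purely i.i.d.: the boundary clusters of $\cP$ and $\cM$ receive forced spins, which is harmless only after one knows they do not reach $\Lambda_\btop$.)

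A secondary, repairable error: your monotonicity in the strip-by-strip step points the wrong way. Failure of a dual crossing in a lower strip is an increasing event for the primal configuration, and conditioning on it (or on revealed open edges) stochastically \emph{raises} the FK measure, hence \emph{lowers} the probability of the decreasing dual-crossing event above; it does not ``only help''. The correct route --- and the one the paper takes --- is to expose the configuration of the lower strips, note that the induced law on the next strip is an FK measure with some wiring (dominated by wired), and use that the RSW upper bound $c^+<1$ on a primal vertical crossing holds uniformly over boundary wirings; this yields $(c^+)^{\lfloor\rho/\alpha\rfloor}$ for the probability that the bottom is connected to height $\rho r$. Also take the strip height proportional to $\alpha r$ rather than $r/2$, or the tiling fails for small $\alpha\le\rho<\tfrac12$.
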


\begin{proof}
  [\emph{\textbf{Proof of Theorem~\ref{thm-1}}}]
We proceed by using the method of Block Dynamics to recursively relate the spectral-gaps of boxes of decreasing sizes, following the approach of Martinelli~\cite{Martinelli97}.

Let $\Lambda = \llb 1,r \rrb \times \llb 1 , r'\rrb$ be a box of dimensions $r \times r'$ and without loss of generality assume $r' \geq r$. For some $\ell \in \{1,\ldots,\lfloor \sqrt{r'/r}\rfloor\}$ partition $\Lambda$ into two vertically overlapping boxes:
\begin{align*}
\Lambda_1 &= \Lambda_1(\ell)= \llb 1,r \rrb \times \llb \tfrac13 r' + \tfrac{\ell-1}3\sqrt{rr'} , r'\rrb \\
 \Lambda_2&= \Lambda_2(\ell)=\llb 1,r \rrb \times \llb 1 , \tfrac13 r' + \tfrac{\ell}3\sqrt{rr'}\rrb
\end{align*}
(e.g., if $r'=r$ then these overlap in the middle third of $\Lambda$).

Denote by $\gap_\cB^\xi$ the spectral-gap of the block-dynamics on $\Lambda$ (see Section~\ref{sec:prelim} for the definition of this dynamics) corresponding to the blocks $\cB=\{\Lambda_1,\Lambda_2\}$ and the boundary condition $\xi$.
\begin{claim}\label{clm-block-dyn}
For any boundary condition $\xi$ on the box $\Lambda$, the spectral-gap of the
  block-dynamics that corresponds to $\cB = \{ \Lambda_1,\Lambda_2 \}$ as defined above satisfies $\gap^\xi_\cB \geq 1-\exp(-c\sqrt{r'/r})$, where $c>0$ is an absolute constant.
\end{claim}
\begin{proof}
It is sufficent to consider the discrete-time version of the dynamics, whose spectral-gap we will denote by $\bar{\gap} =\frac12 \gap_\cB^\xi$. Let $(X_t)$ and $(Y_t)$ be two instances of the discrete time block-dynamics on $\Lambda$ under the boundary condition $\xi$, started at two arbitrary initial positions $X_0$ and $Y_0$ respectively.

Couple the choices of updated blocks in $(X_t)$ and $(Y_t)$ and consider the case where two distinct blocks are updated at times $t,t+1$ for some $t\geq 1$. If we first update $\Lambda_1$ then upon updating this block the boundary condition is the same given $X_{t-1},Y_{t-1}$ except (possibly) on the boundary with $\Lambda\setminus \Lambda_1$.  Using a maximal coupling we have by Theorem~\ref{thm-spatial-mix} (setting parameters $\alpha=\frac13$ and $\rho=\frac13\sqrt{r'/r}$) that with probability at least $1-\exp(-\delta\rho)$ we can couple $X_t$ and $Y_t$ so that they agree on $\llb 1,r \rrb \times \llb \frac13 r' + \frac{\ell}3\sqrt{rr'} , r'\rrb$. In particular they would agree on $\Lambda\setminus\Lambda_2$.
On this event, upon updating $\Lambda_2$ in the following step the two chains may be coupled with probability $1$.
Similarly, if we first update $\Lambda_2$ then for the same $\alpha,\rho$ we have a probability of at least $1-\exp(-\delta\rho)$ to couple $X_t,Y_t$ so that they agree on $\llb 1,r \rrb \times \llb 1, \frac13 r' + \frac{\ell-1}3\sqrt{rr'}\rrb$, again implying coalescence of $X_{t+1},Y_{t+1}$.
We deduce that for any $t \geq 2$
\[ \P(X_t \neq Y_t) \leq \bigg(1-\frac12\left(1-\mathrm{e}^{-\delta\rho}\right)\bigg)^{t-1} = \bigg(\frac12\left(1+\mathrm{e}^{-(\delta/3)\sqrt{r'/r}}\right)\bigg)^{t-1}\,,\]
and in particular the exponential decay of the coupling time satisfies
\begin{align}\label{eq-coupling-decay} \lim_{t\to\infty}\frac1{t}\log \max_{x,y} \|\P_x(X_t \in \cdot)-\P_y(X_t\in\cdot)\|_\mathrm{TV} \leq
\frac12\left(1+\mathrm{e}^{-(\delta/3)\sqrt{r'/r}}\right).\end{align}
As the exponential decay of $(X_t)$ to equilibrium is governed by $\lambda$, the second largest (in absolute value) eigenvalue of the transition kernel,
we have that $\lambda$ is precisely the above limit, hence
\[\gap_{\cB}^\xi = 2 \bar{\gap} = 2(1-\lambda) \geq 1-\mathrm{e}^{-(\delta/3)\sqrt{r'/r}}\,.\]
This completes the claim.
\end{proof}


To obtain Theorem~\ref{thm-inverse-gap} we now relate the spectral-gap of $\Lambda$ with that of the $\Lambda_i$'s. By the above claim and Proposition~\ref{prop-block-single} we obtain that for some absolute $c>0$ and any choice of $\ell$ we have
\begin{equation*}
(\gap_\Lambda^\xi)^{-1} \leq \frac{\sum_{x\in\Lambda} N_x(\ell) A^\xi_{x}}
{\sum_{x\in\Lambda} A^\xi_{x}}
\left(1-\mathrm{e}^{-c\sqrt{r'/r}}\right)^{-1} \max_{i, \eta} (\gap_{\Lambda_i(\ell)}^\eta)^{-1}
\,,
\end{equation*}
where $A^\xi_{x} = \sum_{\sigma} \mu_\Lambda^\xi(\sigma) c(x,\sigma)[f(\sigma^x)-f(\sigma)]^2$
and $N_x(\ell) = \#\{i:\Lambda_i(\ell) \ni x\}$. Averaging over the $L=\lfloor\sqrt{r'/r}\rfloor$ admissible values of $\ell$ now yields
\begin{align*}
(\gap_\Lambda^\xi)^{-1} &\leq \frac1{L} \sum_{\ell=1}^L \frac{\sum_{x\in\Lambda} N_x(\ell) A^\xi_{x}}
{\sum_{x\in\Lambda} A^\xi_{x}}
\left(1-\mathrm{e}^{-c\sqrt{r'/r}}\right)^{-1} \max_{i,\eta} (\gap_{\Lambda_i(\ell)}^\eta)^{-1}\\
&\leq \bigg(\max_{x\in\Lambda} \sum_{\ell=1}^L \frac{N_x(\ell)}{L}\bigg)\left(1-\mathrm{e}^{-c\sqrt{r'/r}}\right)^{-1} \max_{i,\ell,\eta} (\gap_{\Lambda_i(\ell)}^\eta)^{-1}\\
&\leq \bigg(1+\frac1{\lfloor \sqrt{r'/r}\rfloor}\bigg)\left(1-\mathrm{e}^{-c\sqrt{r'/r}}\right)^{-1} \max_{i,\ell,\eta} (\gap_{\Lambda_i(\ell)}^\eta)^{-1}\,,
\end{align*}
where the last inequality is due to the fact that (crucially)
every $x\in\Lambda$ appears in the overlap $\Lambda_1(\ell)\cap\Lambda_2(\ell)$ for at most one value of $\ell$.
Observe that since each $\Lambda_i(\ell)$ has dimensions $r \times r''$ for some $\frac13 r' \leq r'' \leq \frac23 r'$ we can write
\begin{align}\label{eq-recursion-1}
(\gap_\Lambda^\xi)^{-1} &\leq \bigg(1+\frac1{\lfloor \sqrt{r'/r}\rfloor}\bigg)\left(1-\mathrm{e}^{-c\sqrt{r'/r}}\right)^{-1}
\max_{\Lambda'} \max_{\eta} (\gap_{\Lambda'}^\eta)^{-1}\,,
\end{align}
where $\Lambda'$ runs over all boxes of the form
$\llb 1,r\rrb \times \llb 1,r''\rrb$ with $\frac13 r' \leq r'' \leq \frac23 r'$.
Therefore, as long as $r'' \geq r$ we can recursively apply
the above argument and obtain that
\begin{align*}
(\gap_\Lambda^\xi)^{-1} &\leq \vartheta_0 \max_{\Lambda'} \max_{\eta} (\gap_{\Lambda'}^\eta)^{-1}\,,
\end{align*}
where $\Lambda'$ goes over all boxes of the form
$\llb 1,r\rrb \times \llb 1,r''\rrb$ with $\frac13 r \leq r'' \leq r$ and where
\[ \vartheta_0 = \prod_{k=0}^{\infty} \left(1+(3/2)^{-k/2}\right) \left(1-\mathrm{e}^{-c (3/2)^{k/2}}\right)^{-1} < \infty\,.\]
An additional application of the recursion \eqref{eq-recursion-1} (this time reversing the roles of $r,r''$) now yields
\begin{align*}
(\gap_\Lambda^\xi)^{-1} &\leq \vartheta \max_{\Lambda'} \max_{\eta} (\gap_{\Lambda'}^\eta)^{-1}\,,
\end{align*}
where $\vartheta = 2\vartheta_0/(1-\mathrm{e}^{-c})$ and $\Lambda'$ runs over all boxes of the form $\llb 1,a\rrb\times\llb1,b\rrb$
with $a \wedge b \leq \frac23 r$.

The proof of Theorem~\ref{thm-inverse-gap} is now concluded by repeatedly applying this argument $\log_{3/2} n$ times inductively, implying that the inverse-gap of the single site dynamics on an $m\times n$ square lattice with arbitrary boundary conditions
is at most $\vartheta^{\log_{3/2} n} = n^{\log_{3/2}\vartheta}$,
as required.
\end{proof}

\subsection{Proof of Theorem~\ref{thm-spatial-mix} (spatial mixing for critical Ising on $\Z^2$)}\label{s:spatialProof}
The following is an immediate corollary of the Edwards-Sokal~\cite{ES} coupling of the Ising and FK-Ising models under free boundary conditions:
\begin{fact}\label{fact-coupling}
Let $\Lambda\subset \Z^2$ be a finite box and let $\xi$ be a wiring of its boundary vertices $\partial \Lambda$.
Let $S_1,S_2,\ldots$ denote the clusters of $\partial \Lambda$ that are induced by the boundary condition $\xi$.
Let $\omega\in\OmFK(\Lbar)$ be distributed according to the FK-Ising measure $\nu_{\Lbar}^\xi$ and transform it to a spin configuration $\sigma\in\OmI(\Lbar)$ by selecting an i.i.d.\ spin per cluster. Then $\sigma$ is distributed according to $\mu_{\Lbar}$ conditioned on every $S_i$ receiving the same spin in each of its sites.
\end{fact}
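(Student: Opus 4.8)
The plan is to obtain this as a direct unwinding of the Edwards--Sokal coupling~\cite{ES}, the only delicate point being the consistent bookkeeping of the wiring $\xi$. Fix the graph $\Lbar=\Lambda\cup\partial\Lambda$ with its set $E$ of nearest-neighbour edges, let $S_1,S_2,\ldots$ be the clusters of $\partial\Lambda$ induced by $\xi$, and let $A_\xi\subset\OmI(\Lbar)$ be the event that a spin configuration is constant on each $S_i$. Introduce the coupled measure on pairs $(\sigma,\omega)\in\OmI(\Lbar)\times\OmFK(\Lbar)$,
\[ \P^\xi(\sigma,\omega)\;\propto\;\one_{\{\sigma\in A_\xi\}}\prod_{e=(x,y)\in E}\Big[(1-p)\,\one_{\{\omega(e)=0\}}+p\,\one_{\{\omega(e)=1\}}\one_{\{\sigma(x)=\sigma(y)\}}\Big]\,,\]
and note that throughout, a ``cluster'' of $\omega$ is understood as a connected component of the open edges \emph{together with} the identifications forced by $\xi$, so that in particular each $S_i$ lies inside a single cluster. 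I will compute the two marginals of $\P^\xi$ and its conditional law of $\sigma$ given $\omega$, and read off the claim from these.

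First I would describe $\P^\xi(\,\cdot\mid\omega)$. Fixing $\omega$, the product over $E$ becomes the constant $(1-p)^{c(\omega)}p^{o(\omega)}$ times the indicator that $\sigma(x)=\sigma(y)$ on every open edge; together with $\one_{\{\sigma\in A_\xi\}}$ this is exactly the indicator that $\sigma$ is constant on each cluster of $\omega$. Hence, conditionally on $\omega$, $\sigma$ is uniform over spin configurations constant on clusters, i.e.\ it is obtained by assigning an independent uniform spin to every cluster --- precisely the transformation in the statement. Summing $\P^\xi(\sigma\mid\omega)$ over the $2^{k(\omega)}$ admissible $\sigma$ (where $k(\omega)$ counts clusters with the wiring) recovers the weight $p^{o(\omega)}(1-p)^{c(\omega)}2^{k(\omega)}$, so the $\omega$-marginal of $\P^\xi$ is the FK-Ising measure $\nu^\xi_\Lbar$; this is the familiar $q=2$ identity, the colour-count $2^{k(\omega)}$ matching the factor $q^{k(\omega)}$ in $\nu_\Lbar$.

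Next I would compute the $\sigma$-marginal by summing $\P^\xi(\sigma,\omega)$ over $\omega$, which factorizes over edges: an edge with $\sigma(x)=\sigma(y)$ contributes $(1-p)+p=1$ and an edge with $\sigma(x)\neq\sigma(y)$ contributes $1-p=\mathrm{e}^{-2\beta}$, so the marginal is proportional to $\one_{\{\sigma\in A_\xi\}}\,\mathrm{e}^{-2\beta\,\#\{e\in E:\,\sigma(x)\neq\sigma(y)\}}$. Since $\#\{e:\sigma(x)\neq\sigma(y)\}=\tfrac12(|E|-\sum_{u\sim v}\sigma(u)\sigma(v))$, this is proportional to $\one_{\{\sigma\in A_\xi\}}\exp(\beta\sum_{u\sim v}\sigma(u)\sigma(v))$, i.e.\ it is $\mu_\Lbar$ conditioned on $A_\xi$. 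Combining the last two paragraphs: the law of the $\sigma$ produced by sampling $\omega\sim\nu^\xi_\Lbar$ and colouring each cluster with an independent uniform spin is the $\sigma$-marginal of $\P^\xi$, namely $\mu_\Lbar(\,\cdot\mid A_\xi)$, which is the assertion.

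There is no substantial obstacle here --- the proof is routine. The only step warranting care is the one already flagged: fixing, once and for all, the convention that clusters (both for the colouring and for the count $k(\omega)$) are taken with respect to the open edges enlarged by the wiring $\xi$, and checking that any self-loops or parallel edges created when the groups $S_i$ are contracted are immaterial, contributing only constant factors that cancel in the normalization. With that convention in place the three computations above are immediate.
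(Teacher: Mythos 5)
Your proof is correct and follows essentially the same route as the paper: the paper disposes of this fact with the one-line observation that wiring sites in the FK representation corresponds, via Edwards--Sokal, to an infinite-strength Ising interaction between them, i.e.\ to inserting exactly the indicator $\one_{\{\sigma\in A_\xi\}}$ that you place in the joint measure. Your write-up simply carries out explicitly the marginal and conditional computations that the paper leaves implicit, and all three computations (the $\omega$-marginal being $\nu^\xi_{\Lbar}$ with clusters counted through the wiring, the conditional law being i.i.d.\ spins per cluster, and the $\sigma$-marginal being $\mu_{\Lbar}(\cdot\mid A_\xi)$) are accurate.
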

Indeed, to see the above simply observe that wiring two given sites $u,v$ is equivalent via the Edwards-Sokal coupling to introducing an interaction of strength $\beta(u,v) =\infty$ between these sites in the corresponding Ising model, i.e.\ conditioning that they obtain the same spin (see Fig.~\ref{fig:fk-ising-bc}).

\begin{figure}[t]
  {\,} \hfill
  \subfloat[Ising model]{%
    \fbox{\includegraphics[width=2in]{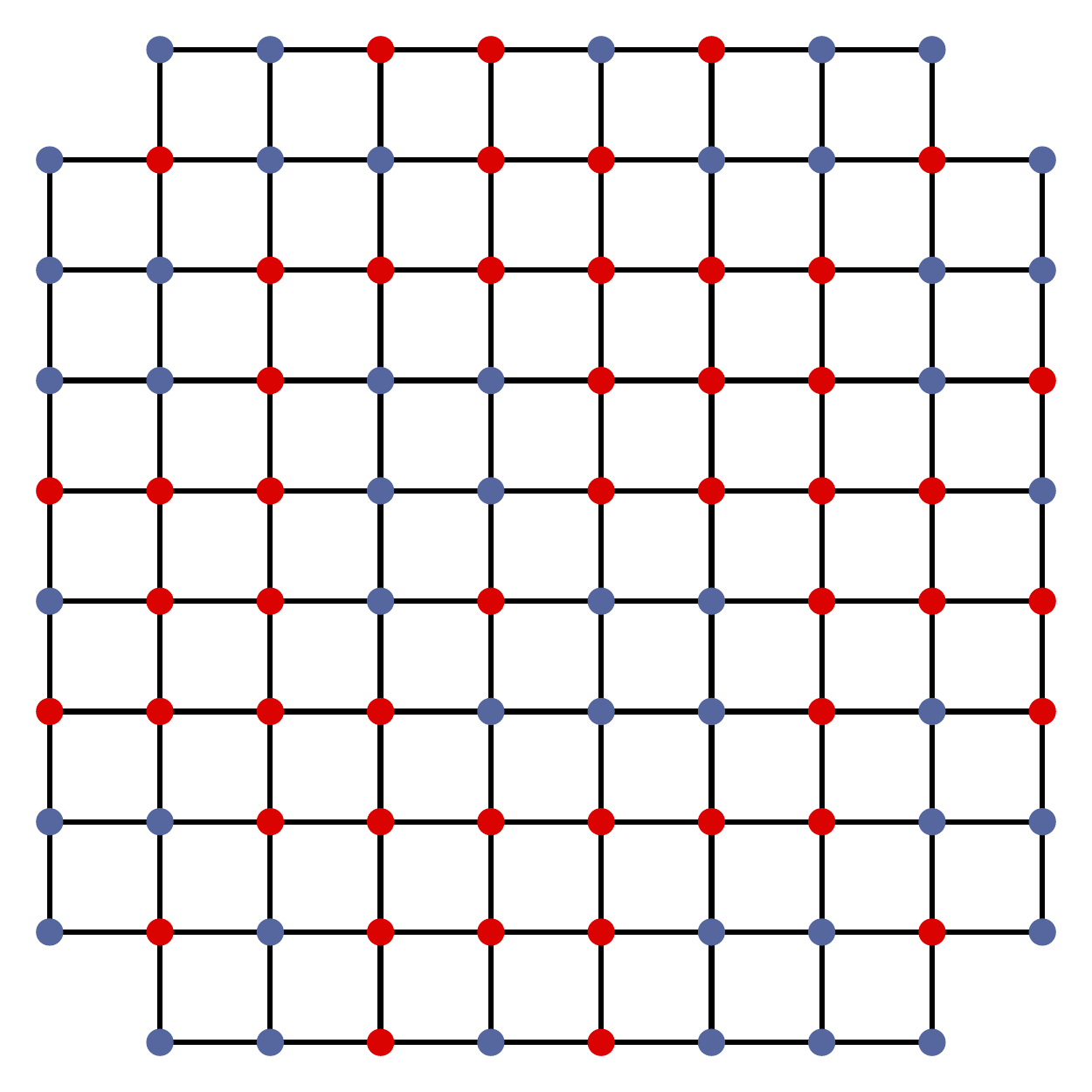}}} \hfill
  \subfloat[FK-Ising model]{%
    \fbox{\includegraphics[width=2in]{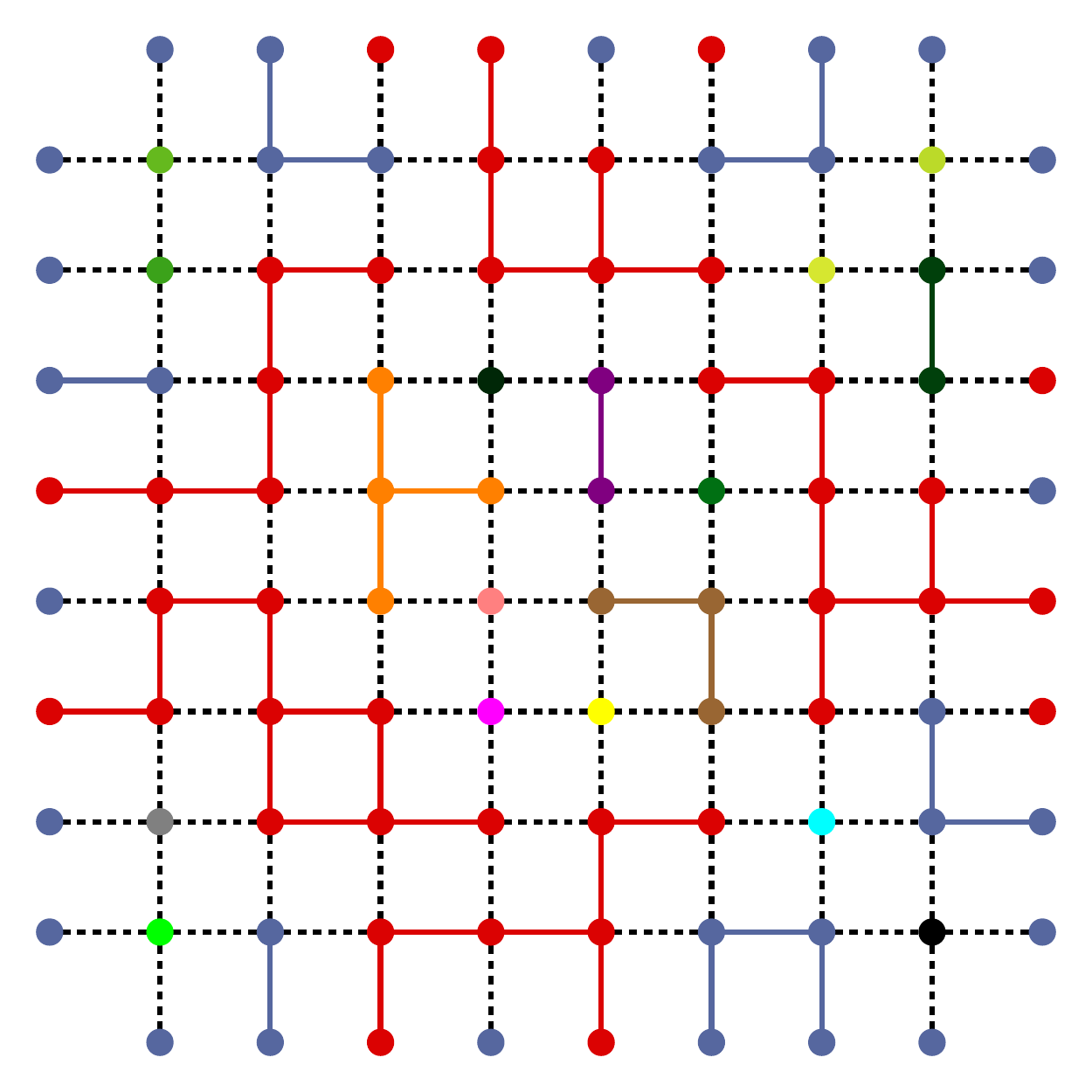}}} \hfill
  {\,}
  \caption{Coupling of the Ising model and its FK-Ising representation under arbitrary fixed boundary.
Blue sites and red sites are each wired pair-wise in the FK-configuration on the right.}
\label{fig:fk-ising-bc}
\end{figure}

An arbitrary boundary condition $\xi$ for the Ising model on $\Lambda$ induces a partition of the vertices of $\partial \Lambda$
 into two subsets, $\cP(\xi),\cM(\xi)$, corresponding to the plus-spins and minus-spins respectively.
With Fact~\ref{fact-coupling} in mind, by slight abuse of notation we will also let $\xi$ denote the boundary condition for the FK-model generated by wiring all the vertices of $\cP$ amongst themselves pair-wise and doing the same for those in $\cM$. Further define the following event for configurations $\omega\in \OmFK(\Lbar)$:
\[ A^\xi \deq \left\{ \cP(\xi) \not\rightsquigarrow\cM(\xi) \right\}\,,\]
that is, $\omega$ does not admit an open path that connects the two subsets $\cP,\cM$.
By the above fact, if we condition that a configuration $\omega\sim\nu^\xi_{\Lbar}$ satisfies the event $A^\xi$ then the Edwards-Sokal procedure for generating an Ising configuration from it agrees with the Ising boundary condition $\xi$ with probability $\frac14$ independently of $\omega$. We thus arrive at the following corollary:
\begin{corollary}\label{cor-FK-Ising-bc}
  Let $\xi$ be an arbitrary boundary condition for the Ising model on $\Lambda$ and define $\cP(\xi),\cM(\xi)$ and $A^\xi$ as above. Let $\omega \in \OmFK$ be distributed according to $\nu^\xi_{\Lbar}(\cdot \mid A^\xi)$ and produce an Ising configuration $\sigma\in \OmI$ from $\omega$ by assigning a plus-spin to the cluster of $\cP$, a minus-spin to the cluster of $\cM$ and i.i.d.\ uniform spins to all other clusters. Then $\sigma \sim \mu^\xi_\Lambda$.
\end{corollary}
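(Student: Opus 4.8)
The plan is to read this off from Fact~\ref{fact-coupling}; the only real content is a careful tracking of the conditionings in the Edwards--Sokal coupling, and there is essentially no computation. Consider the coupled pair $(\omega,\sigma)$ in which $\omega\sim\nu^\xi_{\Lbar}$ — with $\xi$ read as the FK-wiring that joins $\cP(\xi)$ pairwise and joins $\cM(\xi)$ pairwise, so that the clusters of $\partial\Lambda$ it induces are exactly $\cP(\xi)$ and $\cM(\xi)$ — and $\sigma$ is obtained from $\omega$ by selecting an i.i.d.\ uniform spin per cluster. By Fact~\ref{fact-coupling} the marginal law of $\sigma$ is $\mu_{\Lbar}(\cdot\mid B)$, where $B$ is the event that $\cP(\xi)$ is monochromatic and $\cM(\xi)$ is monochromatic. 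Let $C$ be the event that additionally the spin on $\cP(\xi)$ is $+$ and the spin on $\cM(\xi)$ is $-$; then $C\subseteq B$ and $C$ is exactly the event $\{\sigma(\partial\Lambda)=\xi\}$, so conditioning $\sigma$ further on $C$ yields $\mu_{\Lbar}(\cdot\mid C)$, whose marginal on $\Lambda$ is $\mu^\xi_\Lambda$. It therefore suffices to show that conditioning the pair $(\omega,\sigma)$ on $C$ reproduces precisely the sampling recipe stated in the corollary.

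For that I would use two observations. First, in the Edwards--Sokal coupling an open edge may only join two sites carrying equal spins, and each of $\cP(\xi),\cM(\xi)$ lies in a single cluster; hence an open path from $\cP(\xi)$ to $\cM(\xi)$ would force the spin on $\cP(\xi)$ to equal the spin on $\cM(\xi)$, so that $C\subseteq A^\xi$, and conditioning on $C$ is the same as conditioning on $C\cap A^\xi$. Second, on the event $A^\xi$ the cluster containing $\cP(\xi)$ and the cluster containing $\cM(\xi)$ are distinct clusters of $\omega$, so given $\omega\in A^\xi$ the pair of spins assigned to these two clusters is uniform on $\{+,-\}^2$ and independent of $\omega$ and of the spins assigned to all other clusters. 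Putting the two together: in the coupling the event $C$ has conditional probability $\tfrac14$ given any $\omega\in A^\xi$ and probability $0$ given any $\omega\notin A^\xi$, so conditioning $(\omega,\sigma)$ on $C$ turns the $\omega$-marginal into $\nu^\xi_{\Lbar}(\cdot\mid A^\xi)$, while over a sampled $\omega$ it fixes the spin of the $\cP(\xi)$-cluster to $+$, the spin of the $\cM(\xi)$-cluster to $-$, and leaves the spins of all other clusters i.i.d.\ uniform — exactly the procedure in the statement, whose output is therefore distributed as $\mu^\xi_\Lambda$.

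The one step I expect to require genuine care rather than being a formality is the inclusion $C\subseteq A^\xi$ above: it is precisely what permits passing from the law of $\sigma$ conditioned on the plain spin-event $C$ to its law conditioned on the joint event $C\cap A^\xi$ realized by the corollary's construction, the two being a priori different. Everything else is bookkeeping. Degenerate boundary conditions are handled in a line: if one of $\cP(\xi),\cM(\xi)$ is empty (e.g.\ the all-plus boundary condition) the same argument applies with $\tfrac14$ replaced by $\tfrac12$, and if $\partial\Lambda=\emptyset$ the claim is vacuous, $A^\xi$ being the entire configuration space.
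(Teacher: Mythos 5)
Your proposal is correct and follows essentially the same route as the paper: the paper derives the corollary from Fact~\ref{fact-coupling} by the one-line observation that, conditioned on $A^\xi$, the Edwards--Sokal spin assignment agrees with $\xi$ on $\partial\Lambda$ with probability $\tfrac14$ independently of $\omega$, which is exactly the conditioning bookkeeping you carry out in detail (including the inclusion of the agreement event in $A^\xi$). No gaps; your treatment of the degenerate boundary cases is a harmless addition.
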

Note that for some boundary conditions $\xi$ the probability of $A^\xi$ might be extremely small, e.g.\ exponentially small in the side-length of $\Lambda$, in light of which  treating this conditional space is quite delicate.

Let $\xi,\eta$ be the Ising boundary conditions given in Theorem~\ref{thm-spatial-mix} and define
\begin{align*}
  \Gamma \deq \llb 0,r+1\rrb \times \{0\}\,,
\end{align*}
while recalling that by definition $\xi,\eta$ differ at most on the sites of $\Gamma$. In light of Corollary~\ref{cor-FK-Ising-bc}, consider the FK-Ising model on $\Lbar = \llb 0,r+1\rrb \times \llb 0 ,r'+1\rrb$
under the two wirings corresponding to $\xi,\eta$ as defined above (wiring the sites in $\cP$ pairwise and the sites in  $\cM$ pairwise).

We next wish to construct a coupling of the measures \[\left\{\nu_{\Lbar}^\xi(\cdot \mid A^\xi)~,~ \nu_{\Lbar}^\eta(\cdot \mid A^\eta)~,~ \nu_{\Lbar}^1\right\}\,,\] where $\nu_{\Lbar}^1$ is the FK-measure under the (fully) wired boundary condition.
This will be achieved by gradually exposing all the sites of the configuration $\omega^1 \sim \nu_{\Lbar}^1$ that are connected by an open path to the bottom boundary $\Gamma$. Let $\Xi=\Xi(\omega^1)$ denote this set of sites and consider the following process for revealing it:
\begin{itemize}
  \item Initialize $\Xi$ to consist of the sites of $\Gamma$.
  \item Order the edges in $\Lambda$ arbitrarily $\{e_1,e_2,\ldots\}$.
  \item Repeatedly reveal the smallest (as per the above ordering) unexposed edge $e$ that is incident to $\Xi$, setting its value in $\omega^1$ via an independent unit variable $U_e$. If open, add the new endpoint of $e$ to $\Xi$.
  \item Let $T$ be the minimal time after which $\Xi$ has been exhausted (that is, the number of edges exposed in the above procedure).
\end{itemize}
Upon exposing the sites of $\Xi$ we construct the configurations $\omega^\xi \sim \nu_{\Lbar}^\xi(\cdot \mid A^\xi)$ and
$ \omega^\eta \sim \nu_{\Lbar}^\eta(\cdot \mid A^\eta)$ as follows.
Let $\Delta_t$ denote the set of edges exposed up to (including) time $t$, and suppose that at time $t+1$ we are about to expose the edge $e$ via the unit variable $U_e$. By definition,
\[ \omega^1(e) = \one \left\{ U_e \leq \nu_{\Lbar}^1\left( \omega(e)=1 \given \omega(\Delta_{t}) = \omega^1(\Delta_{t})\right) \right\}\,,\]
and we determine $\omega^\tau(e)$ for $\tau \in \{\xi,\eta\}$ analogously using the same $U_e$:
\[ \omega^\tau(e) = \one \left\{ U_e \leq \nu_{\Lbar}^\tau\left( \omega(e)=1 \given A^\tau\,,\, \omega(\Delta_{t}) = \omega^\tau(\Delta_{t})\right) \right\}\,.\]
Beyond time $T$, we reveal the remaining edges according to their aforementioned ordering while following the same recipe.
Clearly this construction has the correct marginals $\nu_{\Lbar}^\xi(\cdot\mid A^\xi),\nu_{\Lbar}^\eta(\cdot\mid A^\eta),\nu_{\Lbar}^1$ and the following claim establishes that $\omega^1$ dominates $\omega^\xi,\omega^\eta$.

\begin{claim}\label{clm-dom}
For $\tau\in\{\xi,\eta\}$ and any integer $t$ the above-defined coupling satisfies $\omega^\tau(\Delta_t) \leq \omega^1(\Delta_t)$.
\end{claim}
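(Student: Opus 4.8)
The plan is to prove Claim~\ref{clm-dom} by induction on $t$, exploiting that the coupling reveals the edges of $\Delta_t$ in a single common order (dictated by the exploration of $\Xi(\omega^1)$, and by a fixed enumeration of the remaining edges once $t>T$) and drives all three configurations by the same family of unit variables $\{U_e\}$. The base case $t=0$ is immediate since $\Delta_0=\emptyset$. For the inductive step, assume $\omega^\tau(\Delta_t)\leq\omega^1(\Delta_t)$ and let $e$ be the edge exposed at step $t+1$. Since $\omega^1(e)$ and $\omega^\tau(e)$ are produced by comparing the \emph{same} $U_e$ against the thresholds $\nu_{\Lbar}^1(\omega(e)=1\mid\omega(\Delta_t)=\omega^1(\Delta_t))$ and $\nu_{\Lbar}^\tau(\omega(e)=1\mid A^\tau,\,\omega(\Delta_t)=\omega^\tau(\Delta_t))$ respectively, it suffices to show the former is at least the latter; the desired inclusion $\omega^\tau(\Delta_{t+1})\leq\omega^1(\Delta_{t+1})$ then follows.

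To compare the two thresholds I would proceed in two steps. First, remove the conditioning on $A^\tau$: by the Domain Markov property, $\nu_{\Lbar}^\tau(\cdot\mid\omega(\Delta_t)=\omega^\tau(\Delta_t))$ is again an FK-measure (on the unexposed edges, with the boundary condition induced by $\tau$ together with the revealed values), and $A^\tau=\{\cP(\tau)\not\rightsquigarrow\cM(\tau)\}$ is a decreasing event; hence by FKG conditioning on $A^\tau$ can only decrease the probability of the increasing event $\{\omega(e)=1\}$, giving $\nu_{\Lbar}^\tau(\omega(e)=1\mid A^\tau,\,\omega(\Delta_t)=\omega^\tau(\Delta_t))\leq\nu_{\Lbar}^\tau(\omega(e)=1\mid\omega(\Delta_t)=\omega^\tau(\Delta_t))$. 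Second, compare across boundary conditions: again by the Domain Markov property, $\nu_{\Lbar}^\tau(\cdot\mid\omega(\Delta_t)=\omega^\tau(\Delta_t))$ and $\nu_{\Lbar}^1(\cdot\mid\omega(\Delta_t)=\omega^1(\Delta_t))$ are FK-measures on the unexposed edges whose boundary conditions are induced by $(\tau,\omega^\tau(\Delta_t))$ and $(1,\omega^1(\Delta_t))$. Since $\tau\leq 1$ as wirings (wiring $\cP$ and $\cM$ separately is a coarsening of the fully wired condition) and $\omega^\tau(\Delta_t)\leq\omega^1(\Delta_t)$ by the inductive hypothesis, the first induced boundary condition is dominated by the second, so monotonicity of the FK-model in boundary conditions yields $\nu_{\Lbar}^\tau(\omega(e)=1\mid\omega(\Delta_t)=\omega^\tau(\Delta_t))\leq\nu_{\Lbar}^1(\omega(e)=1\mid\omega(\Delta_t)=\omega^1(\Delta_t))$. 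Chaining these two inequalities closes the induction.

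The step I expect to require the most care is the bookkeeping around the conditioning rather than any deep estimate. One must check that $\omega^\tau(\Delta_t)$ stays consistent with $A^\tau$ at every stage so that the conditional probabilities defining $\omega^\tau(e)$ are well defined --- this holds because whenever opening $e$ would force a $\cP$--$\cM$ connection given the edges revealed so far, the conditional probability $\nu_{\Lbar}^\tau(\omega(e)=1\mid A^\tau,\ldots)$ equals $0$ and the coupling sets $\omega^\tau(e)=0$. One must also verify that ``decreasing event'' and ``induced boundary condition'' transform correctly under the Domain Markov restriction to the unexposed edges (the event $A^\tau$, after fixing the revealed values, remains a decreasing event in the unexposed coordinates), and that past time $T$ the common fixed enumeration of the remaining edges keeps the induction running verbatim. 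No other ingredient beyond the FKG inequality, the Domain Markov property, and monotonicity in boundary conditions --- all recalled in Section~\ref{sec:prelim} --- is needed.
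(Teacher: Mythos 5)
Your proposal is correct and follows essentially the same route as the paper's proof: induction on $t$, using the Domain Markov property together with monotonicity in boundary conditions (via $\tau\leq 1$ and the inductive hypothesis $\omega^\tau(\Delta_t)\leq\omega^1(\Delta_t)$) to compare the unconditioned thresholds, and FKG to show that conditioning on the decreasing event $A^\tau$ only lowers the probability that $e$ is open, then concluding from the shared uniform variable $U_e$. The only difference is cosmetic (you phrase the inequalities for $\{\omega(e)=1\}$ while the paper works with $\{\omega(e)=0\}$), and your extra bookkeeping remarks about consistency with $A^\tau$ are sound.
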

\begin{proof}
Let $\tau \in \{\xi,\eta\}$, assume by induction that the statement of the claim holds for $t$ and let $e$ be the edge exposed at time $t+1$.
As this means that $\omega^\tau (\Delta_{t}) \leq \omega^1(\Delta_{t})$, the Domain Markov property and monotonicity of boundary conditions give
\begin{align*}
 \nu_{\Lbar}^1\left( \omega(e) = 0 \given\omega(\Delta_{t}) = \omega^1(\Delta_{t}) \right) &\leq
\nu_{\Lbar}^\tau\left( \omega(e) = 0 \given \omega(\Delta_{t}) = \omega^\tau(\Delta_{t}) \right)\,.
\end{align*}
Since $A^\tau$ is a decreasing event, the FKG-inequality for the FK-Ising model together with the Domain Markov property (imposing a boundary condition that arises both from $\tau$ and from $\omega^\tau(\Delta_t)$) ensures that
\begin{align*} \nu_{\Lbar}^\tau\left(\omega^\tau(e)=0\,,\,A^\xi \given \omega(\Delta_t)=\omega^\tau(\Delta_t)\right) \geq
& \nu_{\Lbar}^\tau\left(\omega^\tau(e)=0 \given \omega(\Delta_t)=\omega^\tau(\Delta_t)\right)\\
\cdot~ & \nu_{\Lbar}^\tau\left(A^\xi \given \omega(\Delta_t)=\omega^\tau(\Delta_t)\right)\,,
\end{align*}
and combining the last two equations we deduce that
\begin{align*}
\nu_{\Lbar}^1\left( \omega(e) = 0 \given\omega(\Delta_{t}) = \omega^1(\Delta_{t}) \right) &\leq \nu_{\Lbar}^\tau\left( \omega(e) = 0 \given A^\tau\,,\,\omega(\Delta_{t}) = \omega^\tau(\Delta_{t}) \right)\,.
\end{align*}
The statement now follows from the definition of the coupling.
\end{proof}

Observe that if $\Xi$ is not incident to the top boundary of $\Lambda$ then necessarily the edge-boundary between $\Xi$ and its complement $\Xi^c$ is entirely closed in $\omega^1$, and in particular there is a horizontal crossing path in the dual. By the monotonicity argued in Claim~\ref{clm-dom} this carries to $\omega^\tau$ for $\tau\in\{\xi,\eta\}$ as well.

The following RSW-type estimate of \cite{DHN} for the critical FK-model under arbitrary boundary condition will now imply
that with positive probability $\Xi$ is confined to the box $\llb 0,r+1 \rrb \times \llb 0,\rho r-1\rrb$ (as illustrated in Fig.~\ref{fig:fkclust}):
\begin{theorem}[\cite{DHN}*{Theorem~1}]\label{thm-RSW}
  Let $0 < \alpha_1 < \alpha_2$ . There exist two  constants $0 < c^- < c^+ < 1$ (depending only on
$\alpha_1$ and $\alpha_2$) such that for any rectangle $R$ with side lengths $n$ and $m \in \llb \alpha_1 n,\alpha_2 n\rrb$ (i.e.\ with aspect ratio bounded away from $0$ and $\infty$) one has
\[ c^- \leq \nu_R^\xi(\mathcal{C}_v(R)) \leq c^+\]
 for any boundary conditions $\xi$, where $\mathcal{C}_v(R)$ is the event that there is a vertical open path connecting the top and bottom boundaries of $R$.
\end{theorem}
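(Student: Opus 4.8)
The statement is the main theorem of \cite{DHN}, so the plan is to reconstruct its proof: reduce by soft arguments to a single crossing estimate under \emph{free} boundary conditions, and then obtain that estimate by combining the combinatorial toolbox for the FK model with one genuinely conformal-invariance-flavoured input from Smirnov's discrete holomorphic observable. \emph{Reduction to free boundary conditions.} Since $\mathcal{C}_v(R)$ is increasing and the critical FK-Ising measure is monotone in the boundary condition — the free wiring being minimal and the fully wired one maximal — for every $\xi$ we have $\nu_R^0(\mathcal{C}_v(R)) \le \nu_R^\xi(\mathcal{C}_v(R)) \le \nu_R^1(\mathcal{C}_v(R))$. At $p=p_{\mathrm{sd}}$ the model is self-dual: the dual of $\nu_R^1$ on the dual rectangle $R^\ast$ is the free measure $\nu_{R^\ast}^0$, again critical FK-Ising, and by planar duality a primal top--bottom crossing of $R$ fails exactly when a dual left--right crossing occurs, so $\nu_R^1(\mathcal{C}_v(R)) = 1 - \nu_{R^\ast}^0(\mathcal{C}_h(R^\ast))$, with $R^\ast$ of the same dimensions as $R$ up to $O(1)$ and hence of aspect ratio in a fixed range. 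It therefore suffices to produce a universal $c^->0$ with $\nu_Q^0(\mathcal{C}_v(Q)) \ge c^-$ for every box $Q$ of aspect ratio in $[\alpha_1,\alpha_2]$ (and likewise for horizontal crossings, by the reflection symmetry of $\Z^2$): the lower bound of the theorem then follows from the leftmost inequality above, and the upper bound with $c^+ := 1 - c^-$ from the rightmost one.

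\emph{The analytic input.} For a \emph{square} $Q$, self-duality together with the diagonal reflection symmetry gives $\nu_Q^0(\mathcal{C}_h(Q)) + \nu_{Q^\ast}^1(\mathcal{C}_h(Q^\ast)) = 1$; since $\nu^1$ dominates $\nu^0$ and $Q^\ast \approx Q$, this forces $\nu_Q^0(\mathcal{C}_h(Q))$ to be at most roughly $\tfrac{1}{2}$ and $\nu_Q^1(\mathcal{C}_h(Q))$ at least roughly $\tfrac{1}{2}$ — an \emph{upper} bound on the free-boundary crossing probability, not the lower bound the reduction demands. (This is exactly where the classical Bernoulli RSW proof uses the self-symmetry $\nu^{1/2}(\mathcal{C}_h) = \tfrac{1}{2}$, which has no analogue for FK-Ising because the free condition dualises to the wired one.) The missing lower bound is supplied by Smirnov's fermionic observable: in a quad with Dobrushin-type boundary (one arc wired, the complementary arc free) the observable is s-holomorphic, solves an explicit boundary value problem, is normalised to $1$ at the source edge, and its modulus at an interior mid-edge $z$ bounds below the probability that the FK exploration interface passes through $z$. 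Discrete-complex-analysis a priori estimates — precompactness and identification of subsequential limits, or direct bounds following Chelkak--Smirnov; see \cites{CS,Smirnov2} — then yield a universal, boundary-condition-independent lower bound of order one on the observable at macroscopically interior mid-edges, hence on the probability that the interface reaches the bulk, which by planar topology is precisely a universal lower bound on a crossing probability in a fixed quad with this fixed, non-self-dual boundary condition.

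\emph{RSW bootstrapping, and the main obstacle.} From this seed estimate one runs the FK version of the Russo--Seymour--Welsh argument, the role of independence now played by the FKG inequality (to splice several crossings and ``corner'' pieces into a longer crossing), the square-root trick $\nu(A \cup B) \ge 1 - \sqrt{(1 - \nu(A))(1 - \nu(B))}$ for increasing events (to pass from a lower bound on a union of crossing events to one of them), planar self-duality (to convert the absence of a crossing into a dual crossing of a comparable box), and the Domain Markov property with monotonicity in boundary conditions (so that subdomains exposed in the course of the construction carry boundary conditions dominating the free one, and their crossing probabilities stay bounded below accordingly). These combine in the usual way to upgrade the seed estimate, together with the wired-square lower bound, to a lower bound for crossing a square in the hard direction with free boundary conditions, and then by iteration to any box of aspect ratio in $[\alpha_1,\alpha_2]$ — which is what the reduction requires. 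The combinatorial steps are by now routine FK-RSW bookkeeping; the entire conceptual weight sits in the analytic input of the middle step, since no self-duality or FKG/square-root manipulation can distinguish free from wired boundary conditions, and the boundary-condition-free lower bound must be extracted from the discrete holomorphicity of the observable and a careful control of its behaviour near the boundary. (An alternative route to the same estimate, via the convergence of the spin-cluster boundaries to $\mathrm{CLE}_3$ and its Brownian loop-soup representation, is sketched in \cite{CN}.)
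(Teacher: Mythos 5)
The paper does not prove this statement at all: Theorem~\ref{thm-RSW} is imported verbatim from \cite{DHN} (their Theorem~1) and used as a black box (its only role here is to yield Corollary~\ref{cor-exp-cross} and the spatial-mixing Theorem~\ref{thm-spatial-mix}), so there is no internal proof to compare your reconstruction against. Judged on its own terms, your soft reduction is fine and does match the outer layer of the argument in \cite{DHN}: monotonicity in boundary conditions reduces everything to a lower bound under free boundary conditions, and self-duality at $p_{\mathrm{sd}}$ converts that lower bound into the wired upper bound $c^+=1-c^-$; you also correctly identify Smirnov's fermionic observable as the unavoidable analytic input that replaces the Bernoulli symmetry $\nu^{1/2}(\mathcal{C}_h)=\tfrac12$.

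The genuine gap is in the step you dismiss as ``routine FK-RSW bookkeeping.'' Upgrading a seed estimate to hard-direction crossings with \emph{free} boundary conditions is precisely where the classical Russo--Seymour--Welsh machinery breaks down for FK-Ising: the lowest-crossing/square-root-trick gluing relies on independence (or at least on the explored region leaving favourable boundary conditions behind), whereas conditioning on a lowest crossing in the FK model leaves free-dominated boundary conditions in the unexplored part, and FKG alone does not rescue the argument --- this is exactly why uniform crossing bounds were open before \cite{DHN}. What \cite{DHN} actually do is different from your sketch: using a fermionic observable adapted to \emph{free} boundary conditions they prove a quantitative lower bound on the probability that two prescribed boundary vertices on opposite sides of the rectangle are connected, and then a second-moment argument on the number of connected boundary pairs produces the hard-direction crossing directly, with no gluing of easy-direction crossings at all. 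Your alternative seed (the interface of a Dobrushin quad reaching a bulk point, extracted from the observable's modulus) is plausible as an estimate, but it does not feed into a working bootstrap as written: you would still need an argument of second-moment or gluing type that is valid for a positively associated, non-independent model with free boundary, and supplying that is the actual content of the theorem rather than a routine afterthought.
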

Indeed, applying the above theorem on the box $R=\llb 0,r+1 \rrb \times \llb 0,\alpha r \rrb$ with wired boundary conditions implies that $\nu_R^1(\mathcal{C}_v(R)) \leq c^+ $ for some $c^+<1$ that depends only on the constant $\alpha>0$ specified in Theorem~\ref{thm-spatial-mix}.
Introducing wired boundary at level $\alpha r$ and applying the Domain Markov property we can iterate this argument to obtain that for $R = \llb 0 ,r+1\rrb \times \llb 0,\rho r\rrb$ we have $\nu_R^1(\mathcal{C}_v(R)) \leq (c^+)^{\lfloor \rho/\alpha \rfloor} $ with the same $c^+<1$. Altogether:
\begin{corollary}\label{cor-exp-cross}
Let $r'/r > \alpha > 0$ for some $\alpha > 0$ fixed and let $\alpha \leq \rho < r'/r$. There exists some constant $\delta=\delta(\alpha)$ such that
$\nu_R^\xi(\mathcal{C}_v(R)) \leq \exp(-\delta \rho)$ for $R = \llb 0,r+1\rrb \times \llb 1,\rho r\rrb$ and any
boundary condition $\xi$, where
$\mathcal{C}_v(R)$ is the event that there is a vertical open path connecting the top and bottom of $R$.
\end{corollary}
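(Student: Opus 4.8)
The plan is to deduce the exponential estimate directly from the RSW bound of Theorem~\ref{thm-RSW} by slicing the tall rectangle $R$ into roughly $\rho/\alpha$ horizontal strips of bounded aspect ratio, observing that a vertical crossing of $R$ forces a vertical crossing of every strip, and then killing the strips one at a time --- here the uniformity of Theorem~\ref{thm-RSW} over all boundary conditions is exactly what makes the argument work.

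Concretely, set $N=\lfloor\rho/\alpha\rfloor$, which is $\geq 1$ since $\rho\geq\alpha$, and partition the lower part of $R$ into $N$ disjoint horizontal strips $S_1,\dots,S_N$ stacked from the bottom, each of width $r+2$ and height $\lfloor\alpha r\rfloor$ (this fits inside $R$ since $N\lfloor\alpha r\rfloor\leq\rho r$). Each $S_j$ is a rectangle whose two side lengths are comparable with ratio tending to $1/\alpha$, so its aspect ratio lies in a fixed interval $[\alpha_1,\alpha_2]$ depending only on $\alpha$ (for $r$ larger than a constant depending on $\alpha$; the remaining bounded values of $r$ are handled by the same argument with square strips of side $r+2$ and only make the bound easier). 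Hence Theorem~\ref{thm-RSW} applies to each $S_j$ with one and the same constant $c^+=c^+(\alpha)<1$, uniformly over boundary conditions. Since any open path joining the bottom and top of $R$ must, within each $S_j$, contain a sub-path joining the lower side of $S_j$ to its upper side, we have $\mathcal{C}_v(R)\subseteq\bigcap_{j=1}^N\mathcal{C}_v(S_j)$.

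To bound the probability of this intersection I would use the conditional independence built into the FK-measure. Fix $j$ and condition on the restriction of $\omega$ to all edges lying outside $S_j$: by the Domain Markov property the conditional law of $\omega$ on the edges of $S_j$ is the FK-measure on $S_j$ under the boundary condition induced by $\xi$ and the conditioning, and since $S_j$ has aspect ratio in $[\alpha_1,\alpha_2]$, Theorem~\ref{thm-RSW} bounds the conditional probability of $\mathcal{C}_v(S_j)$ by $c^+$ no matter what that induced boundary condition is. Applying this with $j=1$ --- and noting that $\bigcap_{j\geq2}\mathcal{C}_v(S_j)$ is measurable with respect to the edges outside $S_1$ --- gives $\nu_R^\xi\big(\bigcap_{j=1}^N\mathcal{C}_v(S_j)\big)\leq c^+\,\nu_R^\xi\big(\bigcap_{j=2}^N\mathcal{C}_v(S_j)\big)$, and iterating this with $j=2,3,\dots$ yields $\nu_R^\xi(\mathcal{C}_v(R))\leq (c^+)^N$. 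Finally, since $\lfloor\rho/\alpha\rfloor\geq\rho/(2\alpha)$ whenever $\rho\geq\alpha$, this is at most $\exp(-\delta\rho)$ with $\delta=-\tfrac{1}{2\alpha}\log c^+>0$, which depends only on $\alpha$, as claimed.

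There is no real obstacle here: Theorem~\ref{thm-RSW} does all the work and the rest is the standard RSW iteration. The single point that requires care is the bookkeeping of constants --- one must make sure that every strip $S_j$ has aspect ratio confined to one fixed interval $[\alpha_1,\alpha_2]$ determined by $\alpha$ alone, so that the same $c^+<1$ governs all $N$ strips uniformly over the admissible $r,r',\rho$, and that the exponent $\delta$ then inherits dependence on $\alpha$ only. (An equivalent route, matching the discussion preceding the corollary, is to first reduce to the wired boundary condition $\xi=1$ using monotonicity of the FK-measure in its boundary and the fact that $\mathcal{C}_v(R)$ is increasing, then decouple consecutive strips by wiring the vertices on the interfaces between them --- which only increases the measure --- and induct on $N$ using the Domain Markov property together with the one-strip estimate.)
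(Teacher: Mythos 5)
Your proof is correct and is essentially the argument the paper gives: slice $R$ into order $\rho/\alpha$ strips of aspect ratio comparable to $\alpha$, note that a vertical crossing of $R$ forces a crossing of every strip, and pick up a factor $c^+<1$ per strip via the Domain Markov property, yielding $(c^+)^{\lfloor\rho/\alpha\rfloor}\leq\exp(-\delta\rho)$. The only (cosmetic) difference is that you condition strip-by-strip under the arbitrary boundary condition $\xi$, invoking the uniformity of Theorem~\ref{thm-RSW} over boundary conditions, whereas the paper first passes to the wired measure and iterates by inserting wired boundaries at the intermediate levels --- a variant you yourself note as equivalent in your closing parenthetical.
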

 Since the monotonicity of boundary condition and the Domain Markov property imply that $\nu_R^1$ stochastically dominates $\nu^1_{\Lbar}(\omega(R)\in\cdot)$, we get
\begin{equation}
  \label{eq-Xi-bottom}
  \P\left(\Xi \subset \llb 0,r+1 \rrb \times \llb 0,\rho r -1\rrb \right) \geq 1 - \exp(-\delta\rho)\,.
\end{equation}

\begin{figure}
\centering \includegraphics[width=3in]{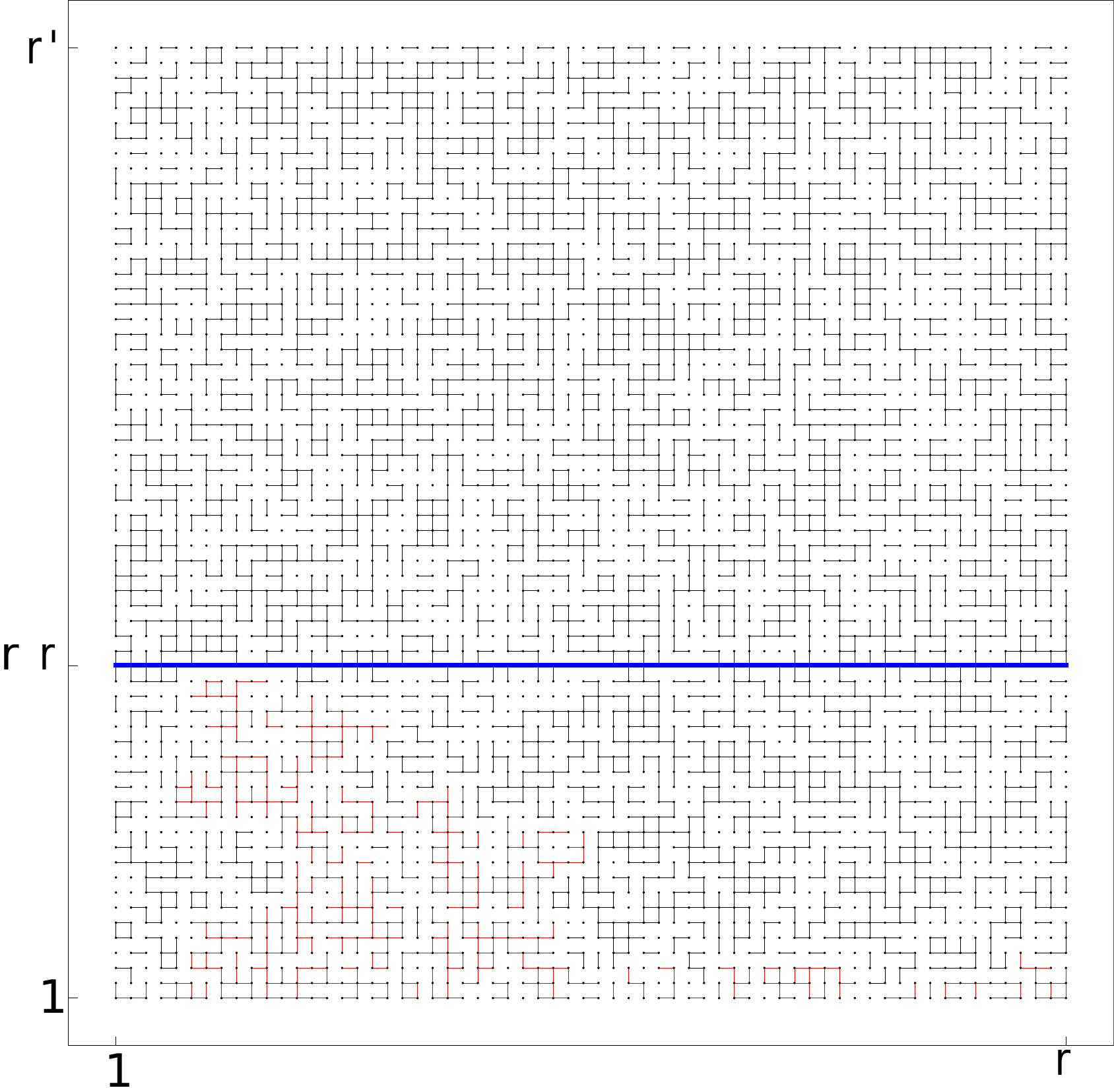}
\hspace{0.25cm} \raisebox{0.4in}{\includegraphics[width=1.5in]{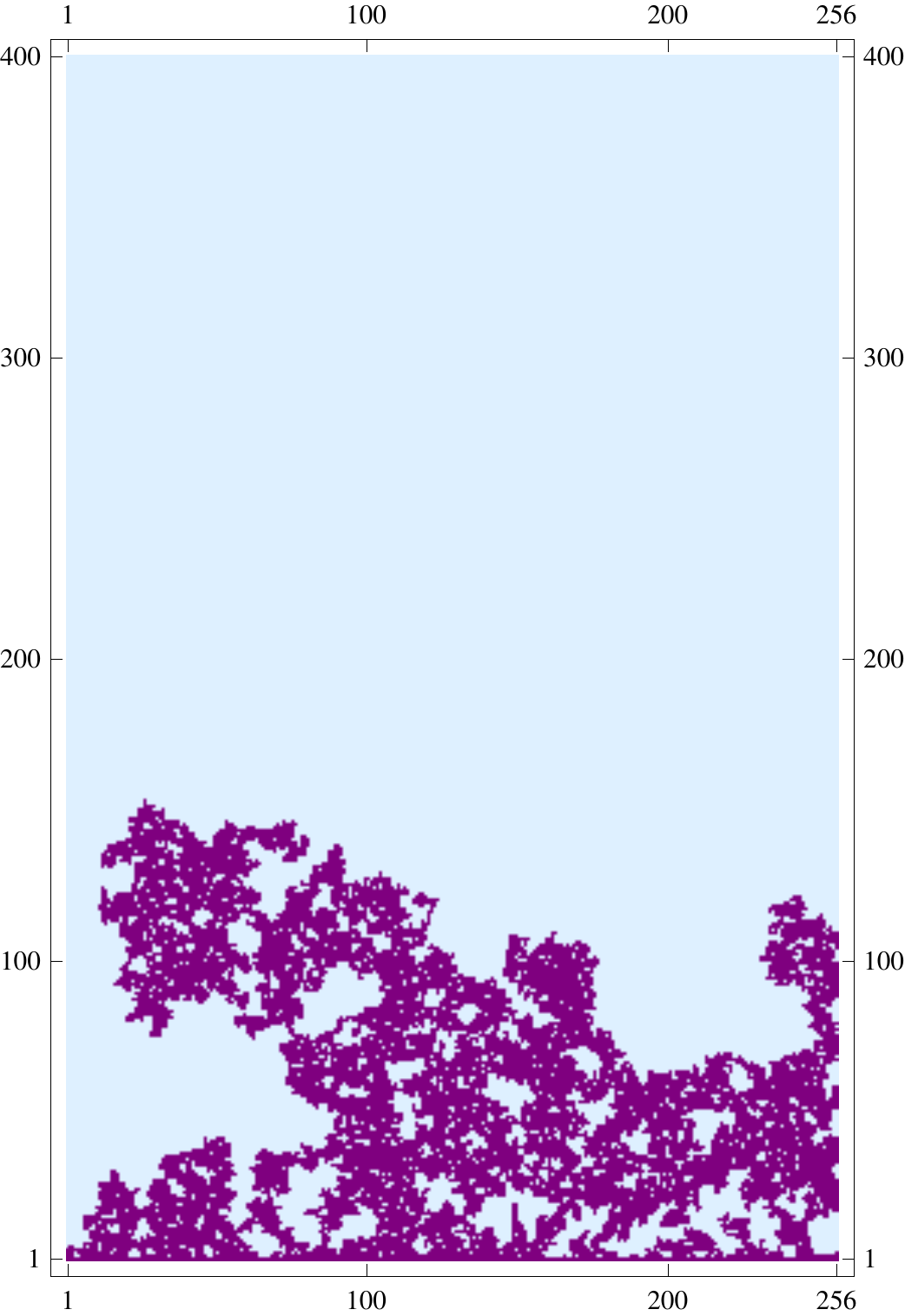}}
\caption{Critical FK-Ising representations illustrating the set $\Xi$.}
\label{fig:fkclust}
\end{figure}

Denote by $\Upsilon$ the edges that are not incident to any of the sites in $\Xi$.
We argue that conditioned on the edges revealed up to time $T$, namely $\omega^\xi(\Delta_T)$ and $\omega^\eta(\Delta_T)$, the measures $\nu_{\Lbar}^\xi(\cdot|A^\xi)$ and $\nu_{\Lbar}^\eta(\cdot |A^\eta)$ on the remaining unexposed edges $\Upsilon$ are identical. In other words,
\begin{align} &~\nu_{\Lbar}^\xi \big(\omega(\Upsilon)\in\cdot \given A^\xi\,,\, \omega(\Delta_T)=\omega^\xi(\Delta_T)\big) \nonumber\\ = &~\nu_{\Lbar}^\eta \big(\omega(\Upsilon)\in\cdot \given A^\eta\,,\, \omega(\Delta_T)=\omega^\eta(\Delta_T)\big)\,.\label{eq-xi-eta-equiv}\end{align}
To see this, first consider $\nu_{\Lbar}^\xi\left( \omega(\Upsilon)\in\cdot \given \omega(\Delta_T)=\omega^\xi(\Delta_T)\right)$. Notice that this measure over the unexposed edges $\Upsilon$ is the FK-model on the graph induced on the sites $\Xi^c$, where the edges in the interface with $\Xi$ are all closed and the boundary conditions inherited from $\xi$ are precisely the pair-wise wirings of the sites of $\cP(\xi)\cap \Xi^c$ and of those in $\cM(\xi) \cap \Xi^c$.
Crucially, by our construction (that conditioned on $A^\xi$) there is no open path between $\cP(\xi)$ and $\cM(\xi)$ in $\omega^\xi(\Delta_T)$, hence the sites $\cP(\xi)\cap \Xi^c$ are not wired to the same cluster of $\cM(\xi)\cap \Xi^c$ at time $T$.

An analogous statement holds for $\omega^\eta(\Delta_T)$, and since $\xi,\eta$ differ only on $\Gamma$ we infer that
$\nu_{\Lbar}^\tau\left( \omega(\Upsilon)\in\cdot \given \omega(\Delta_T)=\omega^\tau(\Delta_T)\right)$ has the same boundary conditions for both $\tau\in\{\xi,\eta\}$. The Domain Markov property now gives
\[ \nu_{\Lbar}^\xi\big( \omega(\Upsilon)\in\cdot \given \omega(\Delta_T)=\omega^\xi(\Delta_T)\big) =
\nu_{\Lbar}^\eta\big( \omega(\Upsilon)\in\cdot \given \omega(\Delta_T)=\omega^\eta(\Delta_T)\big)\,.\]
Furthermore, since there was no open path $\cP \rightsquigarrow \cM$ neither in $\omega^\xi(\Delta_T)$ nor in $\omega^\eta(\Delta_T)$ and the interface between $\Xi,\Xi^c$ is closed, the final configuration $\omega^\tau$ for $\tau\in\{\xi,\eta\}$ can only have such a path if it is contained in $\Upsilon$, which together with the last equality implies \eqref{eq-xi-eta-equiv}.

Observe that, as a result of \eqref{eq-xi-eta-equiv}, beyond time $T$ our procedure automatically couples the configurations $\omega^\xi,\omega^\eta$ via the identity coupling. Altogether, we have shown that we can couple $\omega^\xi,\omega^\eta$ on $\Upsilon$, in particular obtaining that every component $C \subset \Xi^c$ is identical between these two configurations, and any component $C'$ intersecting $C$ is completely contained in $C$ (since the edges between $\Xi$ and $\Xi^c$ are all closed).

We may now assign spins according to Corollary~\ref{cor-FK-Ising-bc}:
Since there is no path between $\cP,\cM$ in either of the configurations, we can assign plus-spins to $\cP$ and minus-spins to $\cM$ when generating the Ising configurations $\sigma^\xi,\sigma^\eta$. Coupling the i.i.d.\ spin values for the remaining clusters of $\Xi^c$ to be the same in both configurations gives $\sigma^\xi(\Xi^c) = \sigma^\eta(\Xi^c)$. Corollary~\ref{cor-FK-Ising-bc} further implies that $\sigma^\tau \sim \mu_\Lambda^\tau$ for both $\tau\in\{\xi,\eta\}$.

 The proof is concluded by inequality~\eqref{eq-Xi-bottom} which ensures that $\Lambda_\btop \subset \Xi^c$ with probability at least $1 - \exp(-\delta\rho)$.
\qed

\begin{remark*}
The exponent $C>0$ in Theorem~\ref{thm-1} can be readily made explicit in terms of the crossing probabilities in the fully-wired critical FK-model.
For instance, in the setting of the square lattice of side-length $n$ the proof gives \[C = 2\log_{3/2}\left[2/(1-p^+(\tfrac13))\right]\,,\]
where $p^+(\tfrac13)$ is the limiting vertical crossing probability in the FK-model on a fully-wired rectangle with conformal modulus $\frac13$.
Given the limiting function $p^+$ for all conformal moduli one could improve the resulting exponent
by optimizing the overlap between the blocks in the recursive analysis. See \cites{LLS,LPS} for numerical estimates of the function $p^+$.
\end{remark*}

\section{A polynomial lower bound and other boundary conditions}
In this section we establish a lower bound on the inverse-gap of the Glauber dynamics for the critical Ising model on a square lattice and extend the upper bound of Theorem~\ref{thm-1} to periodic/free boundary conditions.

\subsection{A polynomial lower bound on the inverse-gap}
To complement our result that the inverse-gap is bounded from above by a polynomial in the side-length $n$ we show the following:
\begin{theorem}
  \label{thm-gap-lower-bound}
  Let $\gap^\xi_\Lambda$ be the spectral-gap of the Glauber dynamics for the critical Ising model on a square lattice $\Lambda$ of side-length $n$ with an arbitrary boundary condition $\xi$.
  Then $\big(\gap_\Lambda^\xi\big)^{-1} \geq c n^{7/4}$ for some absolute $c>0$.
  Furthermore, this also holds for rectangles with shorter side-length $n$.
\end{theorem}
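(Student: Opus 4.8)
The plan is to exhibit a test function $f$ for the Dirichlet form characterization \eqref{eq-dirichlet-form} whose variance is large but whose Dirichlet energy $\mathcal E(f)$ is small, so that $\gap \leq \mathcal E(f)/\var(f)$ is forced to be at most a constant times $n^{-7/4}$. The natural choice is the (suitably normalized) total magnetization, or a local variant of it. For a single spin-flip at $x$ the increment $f(\sigma^x)-f(\sigma)$ is $\pm 2$ (for the global magnetization), so $\mathcal E(f) \leq C_2 \sum_x 2^2 \cdot |\Lambda| \cdot \max_\sigma \mu(\sigma)\cdots$ — more precisely, using boundedness of the rates (item~(3) in Section~\ref{sec:prelim}), $\mathcal E(f) \leq 2 C_2 |\Lambda|$ since each term contributes $c(x,\sigma)[f(\sigma^x)-f(\sigma)]^2 \le 4C_2$ and there are $|\Lambda|$ sites. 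Hence $\mathcal E(f) = O(n^2)$. The content is therefore entirely in the lower bound on $\var(f)$: I must show $\var_{\mu_\Lambda^\xi}\!\big(\sum_{x\in\Lambda}\sigma(x)\big) \geq c\, n^{15/4}$, uniformly over boundary conditions $\xi$. Then $\gap \leq O(n^2)/(c n^{15/4}) = O(n^{-7/4})$, which is the claim.

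The variance of the magnetization is $\var(f) = \sum_{x,y\in\Lambda}\big(\mu(\sigma(x)\sigma(y)) - \mu(\sigma(x))\mu(\sigma(y))\big)$, i.e. a sum of truncated two-point functions. The key input is Onsager's computation of the critical two-point function on $\Z^2$: with free (or $+$) boundary on a box, the truncated correlation $\langle \sigma(x);\sigma(y)\rangle$ decays like $\mathrm{dist}(x,y)^{-1/4}$ (up to constants) for pairs at distance $\ll n$ that are also macroscopically far from $\partial\Lambda$. Restricting the double sum to, say, the central sub-box $\Lambda' = \llb n/4, 3n/4\rrb^2$ and to pairs with $1 \le \mathrm{dist}(x,y) \le n/8$, and using that on this range the truncated two-point function is bounded below by $c\,\mathrm{dist}(x,y)^{-1/4} > 0$, one gets $\var(f) \geq c \sum_{x\in\Lambda'} \sum_{1\le d\le n/8} d \cdot d^{-1/4} \geq c' n^2 \cdot n^{7/4} = c' n^{15/4}$, as needed. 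The subtlety is that this lower bound on individual correlations must be shown to hold \emph{uniformly in the boundary condition} $\xi$; here one uses the FKG inequality and monotonicity (Section~\ref{sec:prelim}): the FK-representation gives $\mu_\Lambda^\xi(\sigma(x)\sigma(y)=1) - \tfrac12 = \tfrac12\nu_\Lambda^\xi(x\leftrightarrow y)$ when $\xi$ is a pure-plus or mixed condition, and the FK connection probability $\nu^\xi(x\leftrightarrow y)$ is minimized by the \emph{free} FK boundary condition, for which the RSW machinery of Theorem~\ref{thm-RSW} (or directly Onsager for the free Ising boundary) already supplies the matching $d^{-1/4}$ lower bound on the central sub-box. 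For a general $\xi$ that mixes plus and minus on $\partial\Lambda$ one should instead run the argument with a shifted test function $f = \sum_x (\sigma(x) - \mathbb E_\mu^\xi \sigma(x))$, noting that $\var$ is translation of $f$ by a constant is irrelevant, and that the lower bound on $\var$ only needs the covariance structure on the interior, where the influence of the boundary spins has been diluted by RSW-type estimates into a connectivity that is still polynomially bounded below.

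The main obstacle I anticipate is making the ``uniformly over $\xi$'' part of the two-point lower bound fully rigorous. Onsager's exact asymptotics are classically stated for free or periodic boundary conditions on $\Z^2$; transferring a lower bound on the truncated correlation of two interior points to an \emph{arbitrary} (and possibly adversarial, highly oscillating) boundary condition requires either (i) the monotonicity-in-boundary-conditions of the FK connectivity together with the RSW estimate of \cite{DHN} to show that even under free FK boundary the interior connection probability is $\gtrsim d^{-1/4}$ on the central box — which is essentially the quasi-multiplicativity / arm-exponent package for critical FK-Ising — or (ii) a direct comparison of $\mu_\Lambda^\xi$ restricted to $\Lambda'$ with $\mu_{\Lambda''}^{\mathrm{free}}$ for an intermediate box $\Lambda \supset \Lambda'' \supset \Lambda'$, again via RSW to decouple $\Lambda''$ from $\partial\Lambda$ up to bounded Radon–Nikodym cost. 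Either route leans on the same conformal-invariance-era technology (RSW for FK-Ising, arm exponents, Onsager) that powers the upper bound, so the lower bound is genuinely of the same flavor; everything else — the energy estimate, the double-sum computation yielding the exponent $15/4 - 2 = 7/4$ — is routine.
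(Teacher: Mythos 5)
Your overall strategy is the paper's: plug the magnetization of a centered sub-box into the Dirichlet form \eqref{eq-dirichlet-form}, bound $\mathcal{E}(f)=O(n^2)$ by Lipschitzness and boundedness of the rates, and get $\var(f)\geq c\,n^{15/4}$ from the critical FK one-arm/connectivity exponent $1/4$ (the paper sums the uniform bound $\nu^{\eta}_{\Lambda^*}(x\rightsquigarrow y)\geq c_2 m^{-1/4}$ of \cite{DHN} over all $\sim n^4$ pairs in $\Lambda^*=\llb n/4,3n/4\rrb^2$, rather than your distance-resolved sum; both give the same exponent). The genuine gap is exactly at the step you flag as the ``main obstacle'': converting FK connectivity into a lower bound on the \emph{truncated} Ising correlation uniformly over an arbitrary mixed boundary condition $\xi$. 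The identity you invoke, $\mu^{\xi}_{\Lambda}(\sigma(x)\sigma(y)=1)-\tfrac12=\tfrac12\nu^{\xi}_{\Lambda}(x\leftrightarrow y)$, holds for free boundary conditions but not for plus or mixed ones: under the Edwards--Sokal coupling with wired boundary components (and, for mixed $\xi$, the conditioning on the event $A^{\xi}$ of Corollary~\ref{cor-FK-Ising-bc}), $\E[\sigma(x)\sigma(y)]$ also picks up the event that $x$ and $y$ each connect to boundary clusters, and one must subtract $\E\sigma(x)\,\E\sigma(y)$, which can nearly cancel it. Monotonicity in boundary conditions does not help here because covariances are not monotone in $\xi$, and replacing $f$ by $f-\E f$ changes nothing since the variance is shift-invariant; the phrase ``diluted by RSW-type estimates'' is precisely where an argument is required but not supplied.

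The paper closes this gap with a concrete conditional decoupling that your route (ii) gestures at but does not execute, and which needs no Radon--Nikodym comparison: first reveal the FK configuration on $\Lambda\setminus\Lambda^*$ (generating a $\sigma$-algebra $\cF$) and restrict to the event $\{\Lambda^*\not\rightsquigarrow\partial\Lambda\}$, whose probability is bounded below by a constant $c_1>0$ uniformly in $\xi$ by the RSW estimate of \cite{DHN}, the conditioning on $A^{\xi}$ only increasing it by FKG since both events are decreasing. On this event every $x\in\Lambda^*$ is disconnected from the boundary, so its conditional magnetization vanishes and the conditional covariance equals $\E[\sigma(x)\sigma(y)\mid\cF]\geq\P(x\rightsquigarrow y\mid\cF)$, which is at least $c_2 n^{-1/4}$ by applying \eqref{e:corrDecay} to the FK measure on $\Lambda^*$ with the (arbitrary) induced boundary condition; the total-variance decomposition $\var(f)\geq\E\bigl[\var(f\mid\cF)\one_{\{\Lambda^*\not\rightsquigarrow\partial\Lambda\}}\bigr]$ then yields $\var(f)\geq c_4 n^{15/4}$. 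Note also that the uniform-in-boundary connectivity lower bound is itself the \cite{DHN} input (valid for \emph{any} boundary condition), so your route (i)'s reduction to the free FK boundary is unnecessary --- but without the disconnection-event mechanism above, neither of your routes actually produces the required lower bound on the truncated correlation for adversarial mixed $\xi$.
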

\begin{proof}
In the static Ising model, as the temperature decreases from high to  critical, Onsager~\cite{Onsager} showed that the correlation
of two spins transitions from having an exponential decay to a polynomial one. As mentioned in the introduction, the dynamics also exhibits such a critical slowdown.
At criticality, Holley~\cite{Holley1} showed that in the infinite volume lattice the spin-spin autocorrelation $\E \sigma_0(0)\sigma_t(0)$
decays like $t^{-1/4}$ rather than exponentially. This can be translated to a polynomial lower bound on $\gap^{-1}$ that is slightly worse than linear. Here we will use the spin-spin correlations to obtain the $n^{7/4}$ lower bound.

Consider the Ising model on $\Lambda=\llb 1, n\rrb^2$ with some boundary condition $\xi$ and let $\Lambda^* = \llb n/4, 3n/4 \rrb^2$.
Our test-function $f$ will be the magnetization over $\Lambda^*$, i.e.\ $f(\sigma) = \sum_{x\in\Lambda^*} \sigma(x)$.
Recalling the definition of the Dirichlet form $\mathcal{E}(f)$ in~\eqref{eq-def-E(f)}, we clearly have $\mathcal{E}(f) = O(n^2)$ since $f$ is $1$-Lipschitz. It thus remains to estimate $\var_{\mu^\xi_\Lambda}(f)$.

Analogous to our approach in the previous section, we construct an sample of the Ising model at equilibrium by going through the critical FK-model via the Edwards-Sokal coupling. We first reveal the set of open bonds in the FK-model on $\Lambda \setminus \Lambda^*$ (that is, every bond with at least one endpoint outside of $\Lambda^*$) and let $\cF$ denote the $\sigma$-algebra generated by these variables.
This is followed by revealing the remaining FK-configuration and constructing the Ising configuration via the Edwards-Sokal coupling. When revealing the FK-configuration we condition on the event $A^\xi$, i.e.\ that there is no open path connecting plus and minus sites of $\xi$.

By the total-variance formula we have that
\begin{align*}
\var(f) &= \E \var(f\mid \cF) + \var \E[f\mid \cF] \geq \E \var(f\mid \cF) \one_{ \{\Lambda^*\not\rightsquigarrow \partial\Lambda\}} \,,
\end{align*} where $\{\Lambda^*\not\rightsquigarrow \partial\Lambda\}$ denotes the event that there is no open path connecting $\Lambda^*$ to $\partial\Lambda$ (i.e., there is a circuit of open edges surrounding $\Lambda^*$ in the dual FK-configuration). As observed by~\cite{DHN}, this event has probability at least $c_1>0$ irrespective of the boundary condition $\xi$ without conditioning on $A^\xi$. By the FKG inequality, this conditioning only increases the probability of the event $\{\Lambda^*\not\rightsquigarrow \partial\Lambda\}$.

Observe that conditioned on the above mentioned event, any site $x\in\Lambda^*$ is disconnected from $\partial\Lambda$ and so in the final Ising configuration its expected magnetization is $\E \sigma(x) = 0$.

Onsager's decay of correlation result was reobtained by~\cite{DHN} using their RSW-type inequalities for the FK-model. In particular, they established that if $\Lambda^*$ is a box of side-length $m$ and $x,y\in\Lambda$ have distance at least $\epsilon m$ from $\partial\Lambda$ for some fixed $\epsilon>0$, then for any boundary condition $\eta$
\begin{equation}\label{e:corrDecay}
\nu^\eta_{\Lambda^*}(x \leftrightsquigarrow y) \geq c_2 m^{-1/4}
\end{equation}
for some $c_2=c_2(\epsilon)> 0$. In our setting this implies that
\begin{align*}
\var(f  \mid \cF, \{\Lambda^*\not\rightsquigarrow \partial\Lambda\}) &= \sum_{x,y\in\Lambda^*} \E [\sigma(x)\sigma(y)\mid \cF, \{\Lambda^*\not\rightsquigarrow \partial\Lambda\}] \\
&\geq \sum_{x,y\in\Lambda^*} \P (x\rightsquigarrow y \mid \cF, \{\Lambda^*\not\rightsquigarrow \partial\Lambda\}) \geq c_3 n^{15/4}\,.
 \end{align*}
 Combining this with the lower bound $c_1>0$ on the probability of $\{\Lambda^*\not\rightsquigarrow \partial\Lambda\}$ we deduce that $\var(f) \geq c_4 n^{15/4}$ for some absolute $c_4 > 0$.
Plugging $f$ as a test-function for the spectral-gap in~\eqref{eq-dirichlet-form} we now deduce that $\gap^\xi_\Lambda \leq c n^{-7/4}$ for some absolute $c>0$, as required.
\end{proof}

\subsection{Free or periodic boundary conditions}
The next theorem establishes polynomial mixing for free/periodic boundary conditions. In fact, the same method infers this result for any mixed boundary conditions, e.g.\ periodic on one side and a mixture of free and fixed conditions on the other. In what follows we restrict our attention to boxes of bounded aspect-ratio although our arguments from the previous section can be applied to achieve analogues of Theorem~\ref{thm-inverse-gap}.
\begin{theorem}
  \label{thm-free-periodic}
  Let $\gap^\xi_\Lambda$ be the spectral-gap of the Glauber dynamics for the critical Ising model on a square lattice $\Lambda$ of side-length $n$ under free or periodic boundary conditions.
  Then $\big(\gap_\Lambda^\xi\big)^{-1} \leq n^C$ for some absolute constant $C>0$.
\end{theorem}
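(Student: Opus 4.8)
The plan is to reduce the free/periodic case to the fixed-boundary case of Theorem~\ref{thm-1} by a single application of block dynamics, combined with a version of the spatial-mixing estimate of Theorem~\ref{thm-spatial-mix} that is robust to the boundary type. First I would observe that the recursive scheme in the proof of Theorem~\ref{thm-inverse-gap} requires only two ingredients: (i) a lower bound on the block-dynamics spectral gap $\gap_\cB^\xi$ for a cover of $\Lambda$ by two overlapping sub-boxes, which in turn follows from a total-variation decoupling estimate of the form given in Theorem~\ref{thm-spatial-mix}; and (ii) control of the gap of the sub-boxes, which after one step are boxes with \emph{internal} (fixed) boundary conditions on the faces shared with $\Lambda$ and free/periodic conditions only on the faces inherited from $\partial\Lambda$. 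So the real work is to re-run the spatial-mixing argument of Section~\ref{s:spatialProof} in this mixed setting and check that the RSW input of~\cite{DHN} still applies.

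The key steps, in order. First, re-examine the Edwards--Sokal reduction (Fact~\ref{fact-coupling} and Corollary~\ref{cor-FK-Ising-bc}): for a free face one simply imposes no wiring there, and for a periodic pair of faces one wires the matching vertices across the torus; in both cases the event $A^\tau$ that $\cP(\tau)\not\rightsquigarrow\cM(\tau)$ is still a \emph{decreasing} event, so the FKG-based domination in Claim~\ref{clm-dom} goes through verbatim with $\nu^1_{\Lbar}$ replaced by the FK-measure that is wired on all of $\partial\Lambda$ except where the boundary is periodic (there one keeps the periodic identification, which only helps connectivity and hence preserves the stochastic domination). Second, re-run the exploration of the set $\Xi$ of sites connected to the differing face $\Gamma$: the crossing estimate of Corollary~\ref{cor-exp-cross}, and hence inequality~\eqref{eq-Xi-bottom}, holds under \emph{arbitrary} FK-boundary conditions by Theorem~\ref{thm-RSW}, so with probability $\ge 1-\exp(-\delta\rho)$ the set $\Xi$ stays confined to a strip of width $\rho r$ near $\Gamma$; the decoupling~\eqref{eq-xi-eta-equiv} beyond time $T$ is unchanged since it only used the Domain Markov property and that $\xi,\eta$ differ on $\Gamma$. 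This yields the analogue of Theorem~\ref{thm-spatial-mix} for boxes whose boundary is fixed on the face containing $\Gamma$ and free/periodic elsewhere. Third, feed this into Proposition~\ref{prop-block-single} and Claim~\ref{clm-block-dyn} exactly as before: one step of the $\{\Lambda_1,\Lambda_2\}$ block dynamics reduces the $n\times n$ free/periodic box to sub-boxes that are fixed on the cut and free/periodic on the remaining faces. Iterating the cut in the other direction reduces to boxes that are \emph{entirely} fixed-boundary, at which point Theorem~\ref{thm-1} supplies the $n^C$ bound; each of the $O(\log n)$ steps multiplies the inverse-gap by an absolute constant, giving $(\gap_\Lambda^\xi)^{-1}\le n^{C'}$.

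The main obstacle I anticipate is bookkeeping the boundary types through the recursion rather than any genuinely new estimate: one must verify that the stochastic-domination step still produces a well-defined dominating FK-measure when part of $\partial\Lambda$ carries periodic (torus) identifications — in particular that "wire everything outside the periodic part" is a legitimate FK-boundary condition dominating $\nu^\xi_{\Lbar}$, and that the RSW theorem of~\cite{DHN}, which is stated for rectangles with ordinary boundary conditions, transfers to the strip $R$ once the periodic face is either contained in the wired complement or lies parallel to the crossing direction. A clean way around this is to note that after the \emph{first} block step the cut-faces are fixed and the periodic/free faces are exactly two opposite sides of the sub-box, so in all subsequent applications of Theorem~\ref{thm-spatial-mix} the differing face $\Gamma$ can always be chosen among the fixed faces, and the periodic/free faces play only the passive role of "some arbitrary boundary condition" that Theorem~\ref{thm-RSW} already tolerates. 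With that choice the argument of Section~\ref{s:spatialProof} applies with no change beyond notation, and the theorem follows; the mixed-boundary generalization mentioned in the statement is obtained identically, since at no point did we use that the non-cut faces carried a \emph{homogeneous} condition.
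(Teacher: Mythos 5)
Your handling of the free boundary case is correct and matches the paper: one verifies the spatial-mixing statement of Theorem~\ref{thm-spatial-mix} when the non-differing sides are free (the paper's Theorem~\ref{thm-spatial-mix-free}), and the recursion of Theorem~\ref{thm-inverse-gap} then runs unchanged. The genuine gap is in the periodic case, and it is exactly the point your ``clean way around'' tries to dismiss. Your plan only establishes a \emph{one-sided} spatial-mixing estimate, in which the two boundary conditions differ on a single face $\Gamma$, and you assert that in every block step $\Gamma$ ``can be chosen among the fixed faces.'' But $\Gamma$ is not yours to choose: in the coupling argument of Claim~\ref{clm-block-dyn} the faces on which the two chains' boundary conditions for a block differ are precisely the internal boundary of that block, i.e.\ $\partial\Lambda_1\cap(\Lambda\setminus\Lambda_1)$. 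On a torus there are no fixed faces at all at the first step, and \emph{any} two-block cover forces each block's internal boundary to have at least two components (a wrap-around band has two boundary circles; a non-wrapping box has all four sides internal). The same happens at the second step on a cylinder when you ``iterate the cut in the other direction'': the non-wrapping blocks have two vertical cut faces. So the one-sided estimate never suffices to conclude, after one block update, that the two chains agree on the complement of the other block, and your recursion cannot even start in the periodic setting. (Relatedly, your claim that boxes become ``entirely fixed-boundary'' after two cuts is only true if the blocks are chosen to wrap through the periodic seams, which is the paper's device, not yours.)

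What is actually needed — and what the paper supplies — is a \emph{two-sided} spatial-mixing statement (Theorem~\ref{thm-spatial-mix-periodic}): boundary conditions differing on the top and bottom faces simultaneously, with possibly periodic sides, proved by exposing two cluster sets $\Xi,\Xi'$ grown from both differing faces and coupling on a middle strip with probability bounded below by a constant (rather than $1-\mathrm{e}^{-\delta\rho}$), which is enough since only a bounded number of such steps is used. The paper also chooses the blocks for the cylinder so that each block contains \emph{both} periodic seams, so that after unwrapping the blocks are ordinary rectangles and Theorem~\ref{thm-1} applies to them directly; the torus is then handled by one further block step whose blocks are cylinders. A secondary inaccuracy in your write-up: periodic identification is not a boundary \emph{wiring} of a rectangle but a change of the underlying graph, so Theorem~\ref{thm-RSW} does not literally ``already tolerate'' it as an arbitrary boundary condition; one must either keep the periodic sides inside the wrap-around blocks (as above) or argue a separate domination/RSW statement for the cylinder before invoking Corollary~\ref{cor-exp-cross}. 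Your FKG/domination and decoupling steps (Claim~\ref{clm-dom} and \eqref{eq-xi-eta-equiv}) are fine as far as they go, but without the two-sided estimate and the wrap-around block geometry the periodic half of the theorem is not proved.
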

\begin{proof}
The proof of Theorem~\ref{thm-1} holds with slight modifications in the current setting and in what follows we describe the required adjustments.

 Consider first the case of free boundary conditions. Through the course of the recursive analysis, some of the boundaries of the blocks $\Lambda_1,\Lambda_2$ may be free while others have a fixed (arbitrary) boundary condition (e.g., in the first step there are 3 sides with free boundary). Hence, the entire proof holds given the following variant of the spatial mixing statement of Theorem~\ref{thm-spatial-mix}:

 \begin{theorem}\label{thm-spatial-mix-free}
 The statement of Theorem~\ref{thm-spatial-mix} holds when the given boundary conditions $\xi,\eta$ are possibly free on one or more of the sides of $\partial\Lambda$.
\end{theorem}
The proof of the above theorem proceeds almost exactly the same as that of Theorem~\ref{thm-spatial-mix}, where the only essential difference is that a side with free boundary now belongs to $\Lambda$ rather than to $\Lbar$.

Notice that the exponent $C>0$ obtained in the above proof for free boundary is identical to the one from the proof of Theorem~\ref{thm-1}.

It remains to treat periodic boundary conditions. In the Edwards-Sokal coupling of the Ising and FK-Ising models, a periodic boundary condition corresponds precisely to wiring the two identified boundary sites. Thus, we may immediately infer another analogue of the spatial-mixing result specialized to two possibly disagreeing sides under the boundary conditions, with the agreeing sides possibly having a periodic boundary condition:
\begin{theorem}\label{thm-spatial-mix-periodic}
Let $\Lambda = \llb 1,r \rrb \times \llb 1 , r'\rrb$ for some integers $r,r'$ satisfying $\alpha_1 \leq r'/r \leq \alpha_2 $ with $\alpha_1,\alpha_2 > 0$ fixed and let
$\Lambda_\bmid= \llb 1,r \rrb \times \llb \psi r',(1-\psi)r' \rrb$ for some fixed $0<\psi<\frac12$.
Let $\xi,\eta$ be two boundary conditions on $\Lambda$ that differ only on the top and bottom boundaries $\llb 1,r\rrb \times (\{0\} \cup \{r'+1\})$ and possibly have periodic boundary on the remaining sides.
Then \[ \left\| \mu^\xi_\Lambda(\sigma(\Lambda_\bmid)\in\cdot) - \mu^\eta_\Lambda(\sigma(\Lambda_\bmid)\in\cdot) \right\|_\tv \leq 1-\delta\,,\]
where $\delta > 0$ is a constant that depends only on $\alpha_1,\alpha_2,\psi$.
\end{theorem}
To prove the above result, apply the following variant of the argument of the proof of Theorem~\ref{thm-spatial-mix} on the corresponding FK-Ising model.
Instead of exposing the set $\Xi$ comprising the sites connected to the bottom boundary $\Gamma$ by an open path, we simultaneously expose $\Xi,\Xi'$ where $\Xi'$ is its analogue with respect to the top boundary. The same reasoning now implies that $\Xi \cup \Xi' \subset \Lambda^c_\bmid$ with positive probability, allowing the coupling of the two FK-measures corresponding to the boundaries $\xi,\eta$.

With the above estimate at hand, consider first boxes with one pair of periodic boundary conditions. Using the block-dynamics with recursive analysis as in Theorem~\ref{thm-1}, this time we choose each of the two blocks to contain both of the periodic boundaries, e.g.\
when these are the top and bottom take
\[\Lambda_1 = \llb 1,r \rrb \times \left(\llb \tfrac13 r' , r'\rrb\cup\llb 1,\tfrac19 r'\rrb\right) ~,~ \Lambda_2= \llb 1,r \rrb \times \left(\llb 1 , \tfrac23 r'\rrb\cup\llb \tfrac89r',r'\rrb\right)\,.\]
After one application of this argument (notice that we did not yet make use of the possible periodic boundary conditions in the spatial-mixing result) we arrive at rectangles with bounded aspect ratio and fixed boundary conditions, where Theorem~\ref{thm-1} already applies. We deduce that the inverse-gap for rectangles with bounded aspect-ratio and one pair of periodic boundaries is polynomial in the side-length.

Finally, to obtain this result for tori (corresponding to periodic boundary conditions on all sides), split the torus into two overlapping blocks, each with one pair of periodic boundary conditions (in the same manner detailed above). At this point, the spatial-mixing result (here applied to periodic boundary) asserts that with positive probability the complement of one block is coupled within a single step. This in turn implies that the inverse-gap of the block-dynamics is uniformly bounded, and the aforementioned bound on the single-site dynamics (for boxes with a single pair of periodic boundaries) completes the proof.
\end{proof}

\subsection{Critical anti-ferromagnetic Ising model}
It is well-known that the anti-ferromagnetic Ising model on the square lattice is equivalent to the ferromagnetic model on the lattice with modified boundary conditions (via the transformation that flips the spins at all odd sites, including those in the boundary). As our bounds hold for any boundary condition we arrive at the following:
\begin{corollary}\label{cor-antiferro}
  The inverse-gap of the Glauber dynamics for the critical anti-ferromagnetic Ising model on the square lattice of side-length $n$ under arbitrary boundary condition is polynomial in $n$.
\end{corollary}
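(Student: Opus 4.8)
\textbf{Proof proposal for Corollary~\ref{cor-antiferro}.}

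The plan is to reduce the anti-ferromagnetic case to the ferromagnetic case, where Theorem~\ref{thm-1} (and its extension Theorem~\ref{thm-inverse-gap}) already gives a polynomial bound on the inverse-gap under an \emph{arbitrary} fixed boundary condition. The key observation is the standard spin-flip symmetry of the bipartite square lattice: writing $\Lambda \subset \Z^2$ as the disjoint union of even sites $\Lambda_{\mathrm{even}}$ and odd sites $\Lambda_{\mathrm{odd}}$ (with respect to the parity of the sum of coordinates), define the involution $\Phi:\{\pm1\}^{\Lbar}\to\{\pm1\}^{\Lbar}$ by $(\Phi\sigma)(x) = -\sigma(x)$ for $x$ odd and $(\Phi\sigma)(x)=\sigma(x)$ for $x$ even. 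Since every edge $u\sim v$ of $\Z^2$ joins an even site to an odd site, we have $(\Phi\sigma)(u)(\Phi\sigma)(v) = -\sigma(u)\sigma(v)$, so the anti-ferromagnetic Hamiltonian (inverse-temperature $-\beta_c$) at $\sigma$ equals the ferromagnetic Hamiltonian ($+\beta_c$) at $\Phi\sigma$. Consequently, if $\mu^{\tau}_{\Lambda,-\beta_c}$ denotes the anti-ferromagnetic Gibbs measure with boundary condition $\tau$ on $\partial\Lambda$, then $\Phi$ pushes it forward to the ferromagnetic measure $\mu^{\Phi\tau}_{\Lambda,\beta_c}$, where $\Phi\tau$ is the boundary condition obtained from $\tau$ by flipping its values at the odd boundary sites. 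This is exactly the ``modified boundary conditions'' referred to in the statement.

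The second step is to check that this static equivalence lifts to the dynamics. The Glauber dynamics for the anti-ferromagnetic model is, by definition, the single-site chain reversible with respect to $\mu^{\tau}_{\Lambda,-\beta_c}$ with rates $c(x,\sigma)$ satisfying the conditions in Section~\ref{sec:prelim} (detailed balance now taken with $-\beta_c$). Conjugating the generator by $\Phi$ — i.e.\ setting $\tilde c(x,\sigma) \deq c(x,\Phi\sigma)$ — produces a single-site chain on $\{\pm1\}^\Lambda$ that is reversible with respect to $\mu^{\Phi\tau}_{\Lambda,\beta_c}$; moreover $\tilde c$ inherits finite range, positivity and boundedness, and (translation invariance up to the parity shift, which does not affect the spectral-gap bound) the hypotheses needed to apply Theorem~\ref{thm-inverse-gap}. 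The crucial point is that $\Phi$ is a bijection of state spaces that intertwines the two generators, $\mathcal{L}_{\beta_c,\Phi\tau}\circ\Phi = \Phi\circ\mathcal{L}_{-\beta_c,\tau}$, hence the two generators are unitarily equivalent as operators (the unitary being composition with $\Phi$, which is an isometry $L^2(\mu^\tau_{\Lambda,-\beta_c})\to L^2(\mu^{\Phi\tau}_{\Lambda,\beta_c})$ since $\mu^\tau_{\Lambda,-\beta_c}=\mu^{\Phi\tau}_{\Lambda,\beta_c}\circ\Phi$). Unitarily equivalent self-adjoint operators have identical spectra, so their spectral-gaps coincide: $\gap^{\tau}_{\Lambda,-\beta_c} = \gap^{\Phi\tau}_{\Lambda,\beta_c}$. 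One can also see this directly from the Dirichlet-form characterization~\eqref{eq-dirichlet-form}: substituting the test function $g\deq f\circ\Phi$ matches $\mathcal{E}(f)$ and $\var(f)$ for the anti-ferromagnetic model at $f$ with those for the ferromagnetic model at $g$, because $\Phi$ maps $\sigma^x$ to $(\Phi\sigma)^x$ and preserves both the measure and the rates under the relabeling.

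Finally, combine the two steps: for the anti-ferromagnetic model on $\Lambda=\llb 1,n\rrb^2$ with an arbitrary boundary condition $\tau$, we have $(\gap^{\tau}_{\Lambda,-\beta_c})^{-1} = (\gap^{\Phi\tau}_{\Lambda,\beta_c})^{-1} \leq n^C$ by Theorem~\ref{thm-1} applied to the ferromagnetic model with the (still arbitrary) fixed boundary condition $\Phi\tau$; the rectangular case follows identically from Theorem~\ref{thm-inverse-gap}. I do not expect a genuine obstacle here — the only points requiring a line of care are (i) verifying that the conjugated rates $\tilde c$ still satisfy the admissibility conditions of Section~\ref{sec:prelim}, in particular that the detailed-balance relation for $\tilde c$ at $\beta_c$ is the correct one, which is immediate from $(\Phi\sigma)(x)\sum_{y\sim x}(\Phi\sigma)(y) = -\sigma(x)\sum_{y\sim x}\sigma(y)$; and (ii) noting that translation invariance is only mildly affected (a shift by an odd vector composes with the parity flip), which is harmless since Theorem~\ref{thm-1} is proved by block dynamics and RSW inputs that are insensitive to this. $\blacksquare$
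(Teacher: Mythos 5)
Your proposal is correct and follows essentially the same route as the paper: the paper's proof is exactly the odd-site spin-flip transformation mapping the critical anti-ferromagnet with boundary condition $\tau$ to the critical ferromagnet with the modified boundary condition, and then invoking that Theorem~\ref{thm-1} holds for arbitrary boundary conditions. You have merely spelled out the details (pushforward of the Gibbs measure, conjugation of the generator, equality of spectral gaps) that the paper states as well known.
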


\section{Concluding remarks and open problems}

\begin{list}{\labelitemi}{\leftmargin=2em}
\item In this work we have established that the inverse-gap of the dynamics for the Ising model on the square lattice is polynomial in its side-length, with a bound independent of the boundary condition. The proof hinges on the recent breakthroughs in the understanding of the critical FK-representation for the Ising model.
\item Furthermore, we show that on rectangles with different side-lengths (whose ratio is not necessarily bounded), the inverse-gap is bounded by a polynomial of its shorter side-length only.
\item A natural question that arises from this work is to determine the exponent of the polynomial growth of the inverse-gap for the critical Ising model in $\Z^2$. At the present time this remains a formidable challenge.
\item Another enticing open problem would be to obtain an upper bound on the inverse-gap for the critical Ising model in higher dimensions ($\Z^d$ for $d\geq3$). Here the machinery of SLE is no longer available necessitating new ideas for the sought after spatial mixing properties of the model.
\end{list}

\begin{bibdiv}
\begin{biblist}

\bib{AH}{article}{
   author={Aizenman, M.},
   author={Holley, R.},
   title={Rapid convergence to equilibrium of stochastic Ising models in the Dobrushin Shlosman regime},
   conference={
      title={},
      address={Minneapolis, Minn.},
      date={1984--1985},
   },
   book={
      series={IMA Vol. Math. Appl.},
      volume={8},
      publisher={Springer},
      place={New York},
   },
   date={1987},
   pages={1--11},
}

\bib{AF}{book}{
    AUTHOR = {Aldous, David},
    AUTHOR = {Fill, James Allen},
    TITLE =  {Reversible {M}arkov Chains and Random Walks on Graphs},
    note = {In preparation, \texttt{http://www.stat.berkeley.edu/\~{}aldous/RWG/book.html}},
}

\bib{Alexander}{article}{
   author={Alexander, Kenneth S.},
   title={On weak mixing in lattice models},
   journal={Probab. Theory Related Fields},
   volume={110},
   date={1998},
   number={4},
   pages={441--471},
}

\bib{BKMP}{article}{
   author={Berger, Noam},
   author={Kenyon, Claire},
   author={Mossel, Elchanan},
   author={Peres, Yuval},
   title={Glauber dynamics on trees and hyperbolic graphs},
   journal={Probab. Theory Related Fields},
   volume={131},
   date={2005},
   number={3},
   pages={311--340},
   note={Preliminary version by C. Kenyon, E. Mossel and Y. Peres appeared in Proc. of the 42nd Annual IEEE Symposium on Foundations of Computer Science (FOCS 2001), pp. 568-578},
}

\bib{CN}{article}{
   author={Camia, Federico},
   author={Newman, Charles M.},
   title={Ising (conformal) fields and cluster area measures},
   journal={Proc. Natl. Acad. Sci. USA},
   volume={106},
   date={2009},
   number={14},
   pages={5547--5463},
}

\bib{CGMS}{article}{
   author={Cesi, F.},
   author={Guadagni, G.},
   author={Martinelli, F.},
   author={Schonmann, R. H.},
   title={On the two-dimensional stochastic Ising model in the phase
   coexistence region near the critical point},
   journal={J. Statist. Phys.},
   volume={85},
   date={1996},
   number={1-2},
   pages={55--102},
}

\bib{CCS}{article}{
   author={Chayes, J. T.},
   author={Chayes, L.},
   author={Schonmann, R. H.},
   title={Exponential decay of connectivities in the two-dimensional Ising
   model},
   journal={J. Statist. Phys.},
   volume={49},
   date={1987},
   number={3-4},
   pages={433--445},
}

\bib{CS}{article}{
   author={Chelkak,Dmitry},
   author={Smirnov, Stanislav},
   title={Universality in the 2D Ising model and conformal invariance of fermionic observables},
   journal={Inventiones Mathematicae},
   status={to appear},
}

\bib{DLP1}{article}{
   author={Ding, Jian},
   author={Lubetzky, Eyal},
   author={Peres, Yuval},
   title={Mixing time of critical Ising model on trees is polynomial in the height},
   journal={Comm. Math. Phys.},
   volume={295},
   date={2010},
   number={1},
   pages={161--207},
}

\bib{DLP2}{article}{
   author={Ding, Jian},
   author={Lubetzky, Eyal},
   author={Peres, Yuval},
   title={The mixing time evolution of Glauber dynamics for the mean-field Ising model},
   journal={Comm. Math. Phys.},
   volume={289},
   date={2009},
   number={2},
   pages={725--764},
}

\bib{DobShl}{article}{
   author={Dobrushin, R. L.},
   author={Shlosman, S. B.},
   title={Completely analytical interactions: constructive description},
   journal={J. Statist. Phys.},
   volume={46},
   date={1987},
   number={5-6},
   pages={983--1014},
}

\bib{DL}{collection}{
   title={Phase transitions and critical phenomena. Vol. 20},
   editor={Domb, C.},
   editor={Lebowitz, J. L.},
   note={Cumulative author, title and subject index, including tables of
   contents, Vol.\ 1--19},
   publisher={Academic Press},
   place={San Diego, CA},
   date={2001},
   pages={vi+201},
}

\bib{DHN}{article}{
    title={Connection probabilities and RSW-type bounds for the FK Ising model},
    author={Duminil-Copin, Hugo},
    author={Hongler, Cl\'{e}ment},
    author={Nolin, Pierre},
    status={preprint},
    note={Available at \texttt{arXiv:0912.4253} (2009)},
}

\bib{ES}{article}{
   author={Edwards, Robert G.},
   author={Sokal, Alan D.},
   title={Generalization of the Fortuin-Kasteleyn-Swendsen-Wang
   representation and Monte Carlo algorithm},
   journal={Phys. Rev. D (3)},
   volume={38},
   date={1988},
   number={6},
   pages={2009--2012},
}

\bib{FK}{article}{
   author={Fortuin, C. M.},
   author={Kasteleyn, P. W.},
   title={On the random-cluster model. I. Introduction and relation to other
   models},
   journal={Physica},
   volume={57},
   date={1972},
   pages={536--564},
}

\bib{FKG}{article}{
   author={Fortuin, C. M.},
   author={Kasteleyn, P. W.},
   author={Ginibre, J.},
   title={Correlation inequalities on some partially ordered sets},
   journal={Comm. Math. Phys.},
   volume={22},
   date={1971},
   pages={89--103},
}

\bib{Glauber}{article}{
   author={Glauber, Roy J.},
   title={Time-dependent statistics of the Ising model},
   journal={J. Mathematical Phys.},
   volume={4},
   date={1963},
   pages={294--307},
}

\bib{Grassberger}{article}{
    author = {Grassberger, Peter},
    title = {Damage spreading and critical exponents for "model A" Ising dynamics},
    journal = {Physica A: Statistical and Theoretical Physics},
    volume = {214},
    number = {4},
    pages = {547--559},
    date = {1995},
}

\bib{Grimmett}{book}{
   author={Grimmett, Geoffrey},
   title={The random-cluster model},
   series={Grundlehren der Mathematischen Wissenschaften [Fundamental
   Principles of Mathematical Sciences]},
   volume={333},
   publisher={Springer-Verlag},
   place={Berlin},
   date={2006},
   pages={xiv+377},
}

\bib{HH}{article}{
    author={Hohenberg, P.C.},
    author={Halperin, B.I.},
    title={Theory of dynamic critical phenomena},
    year={1977},
    journal={Rev. Mod. Phys.},
    volume={49},
    number={3},
    pages={435--479},
}

\bib{Holley1}{article}{
   author={Holley, Richard A.},
   title={On the asymptotics of the spin-spin autocorrelation function in stochastic Ising models near the critical temperature},
   conference={
      title={Spatial stochastic processes},
   },
   book={
      series={Progr. Probab.},
      volume={19},
      publisher={Birkh\"auser Boston},
      place={Boston, MA},
   },
   date={1991},
   pages={89--104},
}

\bib{Holley2}{article}{
   author={Holley, Richard A.},
   title={Remarks on the ${\rm FKG}$ inequalities},
   journal={Comm. Math. Phys.},
   volume={36},
   date={1974},
   pages={227--231},
}

\bib{HoSt1}{article}{
   author={Holley, Richard A.},
   author={Stroock, Daniel W.},
   title={Logarithmic Sobolev inequalities and stochastic Ising models},
   journal={J. Statist. Phys.},
   volume={46},
   date={1987},
   number={5-6},
   pages={1159--1194},
}

\bib{HoSt2}{article}{
   author={Holley, Richard A.},
   author={Stroock, Daniel W.},
   title={Uniform and $L\sp 2$ convergence in one-dimensional stochastic
   Ising models},
   journal={Comm. Math. Phys.},
   volume={123},
   date={1989},
   number={1},
   pages={85--93},
}

\bib{Ioffe}{article}{
   author={Ioffe, Dmitry},
   title={Exact large deviation bounds up to $T\sb c$ for the Ising model in
   two dimensions},
   journal={Probab. Theory Related Fields},
   volume={102},
   date={1995},
   number={3},
   pages={313--330},
}

\bib{Ito}{article}{
    author = {Ito, Nobuyasu},
    title = {Non-equilibrium relaxation and interface energy of the Ising model},
    journal = {Physica A: Statistical and Theoretical Physics},
    volume = {196},
    number = {4},
    pages = {591--614},
    date = {1993},
}

\bib{JS}{article}{
   author={Jerrum, Mark},
   author={Sinclair, Alistair},
   title={Polynomial-time approximation algorithms for the Ising model},
   journal={SIAM J. Comput.},
   volume={22},
   date={1993},
   number={5},
   pages={1087--1116},
}

\bib{LLS}{article}{
   author={Langlands, Robert P.},
   author={Lewis, Marc-Andr{\'e}},
   author={Saint-Aubin, Yvan},
   title={Universality and conformal invariance for the Ising model in
   domains with boundary},
   journal={J. Statist. Phys.},
   volume={98},
   date={2000},
   number={1-2},
   pages={131--244},
}

\bib{LPS}{article}{
   author={Langlands, Robert},
   author={Pouliot, Philippe},
   author={Saint-Aubin, Yvan},
   title={Conformal invariance in two-dimensional percolation},
   journal={Bull. Amer. Math. Soc. (N.S.)},
   volume={30},
   date={1994},
   number={1},
   pages={1--61},
}

\bib{LF}{article}{
    author={Lauritsen, Kent B{\ae}kgaard},
    author={Fogedby, Hans C.},
    title={Critical exponents from power spectra},
    journal={J. Statist. Phys.},
    volume={72},
    date={1993},
    number={1},
    pages={189--205},
}

\bib{LSW1}{article}{
   author={Lawler, Gregory F.},
   author={Schramm, Oded},
   author={Werner, Wendelin},
   title={Values of Brownian intersection exponents. I. Half-plane
   exponents},
   journal={Acta Math.},
   volume={187},
   date={2001},
   number={2},
   pages={237--273},
}

\bib{LSW2}{article}{
   author={Lawler, Gregory F.},
   author={Schramm, Oded},
   author={Werner, Wendelin},
   title={Values of Brownian intersection exponents. II. Plane exponents},
   journal={Acta Math.},
   volume={187},
   date={2001},
   number={2},
   pages={275--308},
}

\bib{LW}{article}{
   author={Lawler, Gregory F.},
   author={Werner, Wendelin},
   title={The Brownian loop soup},
   journal={Probab. Theory Related Fields},
   volume={128},
   date={2004},
   number={4},
   pages={565--588},
}

\bib{Liggett}{book}{
   author={Liggett, Thomas M.},
   title={Interacting particle systems},
   series={Classics in Mathematics},
   note={Reprint of the 1985 original},
   publisher={Springer-Verlag},
   place={Berlin},
   date={2005},
   pages={xvi+496},
}


\bib{LY}{article}{
   author={Lu, Sheng Lin},
   author={Yau, Horng-Tzer},
   title={Spectral gap and logarithmic Sobolev inequality for Kawasaki and
   Glauber dynamics},
   journal={Comm. Math. Phys.},
   volume={156},
   date={1993},
   number={2},
   pages={399--433},
}

\bib{LS}{article}{
    author = {Lubetzky, Eyal},
    author = {Sly, Allan},
    title = {Cutoff for the Ising model on the lattice},
    status = {preprint},
    note={Available at \texttt{arXiv:0909.4320} (2009)},
}

\bib{Martinelli94}{article}{
   author={Martinelli, F.},
   title={On the two-dimensional dynamical Ising model in the phase
   coexistence region},
   journal={J. Statist. Phys.},
   volume={76},
   date={1994},
   number={5-6},
   pages={1179--1246},
}

\bib{Martinelli97}{article}{
   author={Martinelli, Fabio},
   title={Lectures on Glauber dynamics for discrete spin models},
   conference={
      title={Lectures on probability theory and statistics},
      address={Saint-Flour},
      date={1997},
   },
   book={
      series={Lecture Notes in Math.},
      volume={1717},
      publisher={Springer},
      place={Berlin},
   },
   date={1999},
   pages={93--191},
}

\bib{Martinelli04}{article}{
   author={Martinelli, Fabio},
   title={Relaxation times of Markov chains in statistical mechanics and
   combinatorial structures},
   conference={
      title={Probability on discrete structures},
   },
   book={
      series={Encyclopaedia Math. Sci.},
      volume={110},
      publisher={Springer},
      place={Berlin},
   },
   date={2004},
   pages={175--262},
}

\bib{MO}{article}{
   author={Martinelli, F.},
   author={Olivieri, E.},
   title={Approach to equilibrium of Glauber dynamics in the one phase
   region. I. The attractive case},
   journal={Comm. Math. Phys.},
   volume={161},
   date={1994},
   number={3},
   pages={447--486},
}

\bib{MO2}{article}{
   author={Martinelli, F.},
   author={Olivieri, E.},
   title={Approach to equilibrium of Glauber dynamics in the one phase
   region. II. The general case},
   journal={Comm. Math. Phys.},
   volume={161},
   date={1994},
   number={3},
   pages={487--514},
}

\bib{MOS}{article}{
   author={Martinelli, F.},
   author={Olivieri, E.},
   author={Schonmann, R. H.},
   title={For $2$-D lattice spin systems weak mixing implies strong mixing},
   journal={Comm. Math. Phys.},
   volume={165},
   date={1994},
   number={1},
   pages={33--47},
}

\bib{NB}{article}{
   author={Nightingale, M. P.},
   author={Bl{\"o}te, H. W. J.},
   title = {Dynamic Exponent of the Two-Dimensional Ising Model and Monte Carlo Computation of the Subdominant Eigenvalue of the Stochastic Matrix},
   journal = {Phys. Rev. Lett.},
   volume = {76},
   number = {24},
   pages = {4548--4551},
   date = {1996},
}

\bib{Onsager}{article}{
   author={Onsager, Lars},
   title={Crystal statistics. I. A two-dimensional model with an
   order-disorder transition},
   journal={Phys. Rev. (2)},
   volume={65},
   date={1944},
   pages={117--149},
}

\bib{PW}{article}{
   author={Propp, James Gary},
   author={Wilson, David Bruce},
   title={Exact sampling with coupled Markov chains and applications to
   statistical mechanics},
   booktitle={Proceedings of the Seventh International Conference on Random
   Structures and Algorithms (Atlanta, GA, 1995)},
   journal={Random Structures Algorithms},
   volume={9},
   date={1996},
   number={1-2},
   pages={223--252},
}

\bib{RW}{article}{
 author = {Randall, Dana},
 author = {Wilson, David Bruce},
 title = {Sampling spin configurations of an Ising system},
 booktitle = {Proceedings of the tenth annual ACM-SIAM symposium on Discrete algorithms (SODA 1999)},
 pages = {959--960},
 publisher = {Society for Industrial and Applied Mathematics},
 address = {Philadelphia, PA, USA},
}


\bib{Russo}{article}{
   author={Russo, Lucio},
   title={A note on percolation},
   journal={Z. Wahrscheinlichkeitstheorie und Verw. Gebiete},
   volume={43},
   date={1978},
   number={1},
   pages={39--48},
}

\bib{SaloffCoste}{article}{
   author={Saloff-Coste, Laurent},
   title={Lectures on finite Markov chains},
   conference={
      title={Lectures on probability theory and statistics},
      address={Saint-Flour},
      date={1996},
   },
   book={
      series={Lecture Notes in Math.},
      volume={1665},
      publisher={Springer},
      place={Berlin},
   },
   date={1997},
   pages={301--413},
}

\bib{Schonmann}{article}{
   author={Schonmann, Roberto H.},
   title={Second order large deviation estimates for ferromagnetic systems
   in the phase coexistence region},
   journal={Comm. Math. Phys.},
   volume={112},
   date={1987},
   number={3},
   pages={409--422},
}

\bib{Schramm}{article}{
   author={Schramm, Oded},
   title={Scaling limits of loop-erased random walks and uniform spanning
   trees},
   journal={Israel J. Math.},
   volume={118},
   date={2000},
   pages={221--288},
}

\bib{SW}{article}{
   author={Seymour, P. D.},
   author={Welsh, D. J. A.},
   title={Percolation probabilities on the square lattice},
   note={Advances in graph theory (Cambridge Combinatorial Conf., Trinity
   College, Cambridge, 1977)},
   journal={Ann. Discrete Math.},
   volume={3},
   date={1978},
   pages={227--245},
}

\bib{Sheffield}{article}{
   author={Sheffield, Scott},
   title={Exploration trees and conformal loop ensembles},
   journal={Duke Math. J.},
   volume={147},
   date={2009},
   number={1},
   pages={79--129},
}

\bib{Smirnov2}{article}{
   author={Smirnov, Stanislav},
   title={Conformal invariance in random cluster models. I. Holomorphic fermions in the Ising model},
   journal={Ann. Math.},
   status={to appear},
}

\bib{Smirnov3}{article}{
   author={Smirnov, Stanislav},
   title={Critical percolation in the plane: conformal invariance, Cardy's
   formula, scaling limits},
   language={English, with English and French summaries},
   journal={C. R. Acad. Sci. Paris S\'er. I Math.},
   volume={333},
   date={2001},
   number={3},
   pages={239--244},
}

\bib{Smirnov1}{article}{
   author={Smirnov, Stanislav},
   title={Towards conformal invariance of 2D lattice models},
   conference={
      title={International Congress of Mathematicians. Vol. II},
   },
   book={
      publisher={Eur. Math. Soc., Z\"urich},
   },
   date={2006},
   pages={1421--1451},
}

%

\bib{SZ1}{article}{
   author={Stroock, Daniel W.},
   author={Zegarli{\'n}ski, Bogus{\l}aw},
   title={The equivalence of the logarithmic Sobolev inequality and the Dobrushin-Shlosman mixing condition},
   journal={Comm. Math. Phys.},
   volume={144},
   date={1992},
   number={2},
   pages={303--323},
}

\bib{SZ2}{article}{
   author={Stroock, Daniel W.},
   author={Zegarli{\'n}ski, Bogus{\l}aw},
   title={The logarithmic Sobolev inequality for continuous spin systems on a lattice},
   journal={J. Funct. Anal.},
   volume={104},
   date={1992},
   number={2},
   pages={299--326},
}

\bib{SZ3}{article}{
   author={Stroock, Daniel W.},
   author={Zegarli{\'n}ski, Bogus{\l}aw},
   title={The logarithmic Sobolev inequality for discrete spin systems on a lattice},
   journal={Comm. Math. Phys.},
   volume={149},
   date={1992},
   number={1},
   pages={175--193},
}

\bib{Thomas}{article}{
   author={Thomas, Lawrence E.},
   title={Bound on the mass gap for finite volume stochastic Ising models at
   low temperature},
   journal={Comm. Math. Phys.},
   volume={126},
   date={1989},
   number={1},
   pages={1--11},
}

\bib{WH}{article}{
  title = {Universality in dynamic critical phenomena},
  author = {Wang, Fu-Gao},
  author = {Hu, Chin-Kun},
  journal = {Phys. Rev. E},
  volume = {56},
  number = {2},
  pages = {2310--2313},
  year = {1997},
}

\bib{WHS}{article}{
  author={Wang, F.},
  author={Hatano, N.},
  author={Suzuki, M.},
  title={Study on dynamical critical exponents of the Ising model using the damage spreading method},
  journal={Journal of Physics A: Mathematical and General},
  volume={28},
  number={16},
  pages={4543--4552},
  date={1995},
}

\bib{Werner1}{article}{
   author={Werner, Wendelin},
   title={Random planar curves and Schramm-Loewner evolutions},
   conference={
      title={Lectures on probability theory and statistics},
   },
   book={
      series={Lecture Notes in Math.},
      volume={1840},
      publisher={Springer},
      place={Berlin},
   },
   date={2004},
   pages={107--195},
}

\bib{Werner2}{article}{
   author={Werner, Wendelin},
   title={SLEs as boundaries of clusters of Brownian loops},
   language={English, with English and French summaries},
   journal={C. R. Math. Acad. Sci. Paris},
   volume={337},
   date={2003},
   number={7},
   pages={481--486},
}

\bib{Zee1}{article}{
   author={Zegarli{\'n}ski, Bogus{\l}aw},
   title={Dobrushin uniqueness theorem and logarithmic Sobolev inequalities},
   journal={J. Funct. Anal.},
   volume={105},
   date={1992},
   number={1},
   pages={77--111},
}

\bib{Zee2}{article}{
   author={Zegarli{\'n}ski, Bogus{\l}aw},
   title={On log-Sobolev inequalities for infinite lattice systems},
   journal={Lett. Math. Phys.},
   volume={20},
   date={1990},
   number={3},
   pages={173--182},
}

\end{biblist}
\end{bibdiv}

\end{document}